\documentclass[a4paper,reqno, 11pt]{amsart}
\usepackage[utf8]{inputenc}
\usepackage{tikz}
\usepackage{tikz-cd}
\usepackage[top=2.5cm,bottom=2.5cm,left=2.2cm,right=2.2cm]{geometry}
\usepackage{graphicx}
\usepackage{mathtools}

\usepackage{hyperref}

\usepackage[dvips]{epsfig}
\usepackage{color}
\usepackage{verbatim}
\usepackage{amsgen, amstext,amsbsy,amsopn,amssymb, amsthm, mathptmx, amsfonts,amssymb,amscd,amsmath,nicefrac,euscript,enumerate,url,verbatim,calc,framed}
\usepackage[all]{xy}
\def\red#1{{\color{red}#1}}

\theoremstyle{plain}
\newtheorem{thm}{\bf Theorem}[section]

\newtheorem{prop}[thm]{\bf Proposition}
\newtheorem{lem}[thm]{\bf Lemma}

\newtheorem{setup}{\bf Setup}

\theoremstyle{definition}

\newtheorem{notation}[thm]{\bf Notation}

\theoremstyle{remark}
\newtheorem{rem}[thm]{\bf Remark}
\newtheorem{quest}[thm]{\bf Question}

\newtheorem{note}[thm]{\bf Note}

\DeclareMathOperator{\ass}{Ass}

\DeclareMathOperator{\grade}{grade}

\DeclareMathOperator{\Min}{Min}

\DeclareMathOperator{\pf}{Pf}
\DeclareMathOperator{\reg}{reg}

\DeclareMathOperator{\Sym}{Sym}

\DeclareMathOperator{\dev}{dev}

\def\height{\operatorname{ht}}

\def\NN{\mathbb{N}}

\newsavebox\foobox

\begin{document}

\title{\textbf{Free Resolutions of Symmetric 
Algebras of Ideals with Deviation Two}}

\author[Neeraj Kumar]{Neeraj Kumar}
\address{Department of Mathematics, Indian Institute of Technology Hyderabad, Kandi, Sangareddy - 502285, Telangana, INDIA}
\email{neeraj@math.iith.ac.in}

\author[Aniruddha Saha]{Aniruddha Saha}
\address{Department of Mathematics, Indian Institute of Technology Hyderabad, Kandi, Sangareddy - 502285, Telangana, INDIA}
\email{ma20resch11001@iith.ac.in}

\author[Chitra Venugopal]{Chitra Venugopal}
\address{Chennai Mathematical Institute, Siruseri, Tamil Nadu 603103. India}
\email{chitrav@cmi.ac.in}

\date{\today}
\subjclass[2020]{{Primary 13D02,13C05}; Secondary {13A30} } 
\keywords{Free resolution, Symmetric algebra, Koszul homology, Huneke-Ulrich Ideals, Regularity, Deviation two ideal}

\begin{abstract}
For a graded ideal $I$ in a graded ring, the {deviation} of $I$ is defined as the difference between the minimal number of generators of $I$ and its grade. In this article, we provide bigraded free resolutions of the symmetric algebras for specific classes of ideals of deviation two. Additionally, we study the regularity of powers of deviation two ideals generated by $d$-sequences. In particular, we examine the powers of Huneke–Ulrich ideals and find bounds on their regularity.  
\end{abstract}

\maketitle

\vspace{-1cm}

\section*{Introduction}
Symmetric and Rees algebras of graded ideals are among the most studied bigraded algebras as they play an important role in commutative algebra and algebraic geometry. The Rees algebra, in particular, provides an algebraic realization of the concept of blowing up a variety along a subvariety. An important class of ideals in this context are the ideals of linear type, since for these ideals the symmetric algebra and the Rees algebra are isomorphic. Minimal free resolutions are one of the main tools which helps in studying their structures. However, determining these resolutions for arbitrary ideals is generally a difficult problem. Imposing additional conditions on the generators of the ideal often leads to a better understanding of the associated homological invariants.

\vspace{2mm}

Let $A$ be a graded ring, and let $I \subseteq A$ be a graded ideal. The \textit{deviation} of $I$, denoted by $\dev(I)$, is the difference between the minimal number of generators of $I$ and its grade. For ideals of deviation zero, that is, complete intersections, Micali in \cite{M1964}, showed that the corresponding Rees algebra is isomorphic to the symmetric algebra. Moreover, Micali and Roby in \cite{MR1971} provide an explicit description of the defining equations of the Rees algebra as the $2 \times 2$ minors of a $2 \times n$ matrix, where $n$ is the minimal number of generators of the ideal. Thus, the Eagon–Northcott complex \cite{EN, Eisenbud_text} provides a free resolution of the Rees algebra of ideals of deviation zero.

\vspace{2mm}

The approximation complex $\mathcal{Z}_\bullet $ (See Complex (\ref{eq:ApproxComplex})) is a foundational framework for the study of blowup algebras; under suitable hypotheses, it provides free resolutions of symmetric algebras $\Sym_A(I)$ (cf.\cite{HSV1, HSV2}). Its structural drawback is equally well known; $\mathcal{Z}_\bullet $ is built from Koszul cycles $Z_i $, which are typically \emph{not} free, and this nonfreeness propagates into the higher modules of the complex, making explicit resolutions difficult to read off. Motivated by \cite[Theorem~5.8]{HSV3}, Cid-Ruiz, Polini and Ulrich in \cite{Ulrich&Polini} refine $ \mathcal{Z}_\bullet $ to a \emph{halfway} resolution.
 If $g=\dev(I)>0 $, the new complex agrees with $\mathcal{Z}_\bullet $ in its initial portion and replaces the last $g-1 $ positions by free modules. Moreover, the acyclicity of $\mathcal{Z}_\bullet$ guarantees the acyclicity of the new complex
$\text{\cite[Proposition~4.5]{Ulrich&Polini}}$. This refinement is significant because computing free resolutions of symmetric algebras is, in general, a steep challenge. In this article we focus on the case $g=2$; in this setting $\mathcal{C}_\bullet $ helps in identifying the tail end of Complex (\ref{eq:ApproxComplex}) by free modules, see Complex (\ref{NewComplex}).

\vspace{2mm}

For positive deviation $g=\dev(I)\ge 1$, a free resolution of $\Sym_A(I)$ ultimately depends on controlling the Koszul homology $ H_i\big(K_\bullet(\mathbf f;A)\big), \; 1\le i\le g-1,$ for a minimal generating sequence $\mathbf f $ of $I$. Observe that non-vanishing Koszul homology modules need not be free or even Cohen–Macaulay, and determining their minimal resolutions is a subtle and intricate task. As the deviation $g$ increases, the construction of a free resolution of $\Sym_A(I)$ becomes increasingly challenging. The initial case $g=1$ (almost complete intersections) is relatively moderate. In that setting, one needs only the first Koszul cycle $Z_1(\mathbf f;A)$, which is resolved by a truncation of the minimal free resolution of $I$. Explicit free resolutions of $\Sym_A(I)$ are studied for both equigenerated and non-equigenerated ideals, see \cite{Cazalet, NKCV3, Ulrich&Polini}. 

\vspace{2mm}

The case $g=2$ (deviation two) is the first place where substantially new behavior occurs. Under a strong structural hypothesis, namely, $I$ is a perfect ideal of deviation two and the approximation complex $\mathcal{Z}_\bullet$ is acyclic: 
Cid-Ruiz, Polini and Ulrich in \cite[Theorem~4.11(ii)]{Ulrich&Polini} construct an explicit free resolution of the symmetric algebra $\operatorname{Sym}_A(I)$. In this setting, there is a canonical identification $
H_2\!\big(K_\bullet(\mathbf f;A)\big)\ \cong\ \omega_{A/I}\;\text{(up to a shift)}$, so the second Koszul homology is resolved by dualizing a minimal free resolution of $A/I$. When $I$ is not perfect, constructing a free resolution of $\Sym_A(I)$ in the deviation two case becomes challenging. One must first develop homological methods to resolve the second Koszul homology module.

\vspace{2mm}

A primary objective of this paper is to develop homological constructions to resolve the second Koszul homology module and apply them to obtain a free resolution of $\Sym_A(I)$. In this article, we study the symmetric algebra of ideals of deviation two that are not necessarily perfect. In Section \ref{Section3}, we construct two families of deviation two ideals starting from a given ideal and develop the homological tools that will be used to resolve the corresponding symmetric algebras in the subsequent section. This involves studying the second Koszul homology modules of these ideals and providing an explicit description of their free resolutions in terms of the free resolution of the second Koszul homology module of the base ideal (see Lemma \ref{Res_of_H2} and Proposition \ref{H1-regular}). In Section \ref{Section4},  Theorem \ref{RES}, we provide explicit bigraded free resolution of the symmetric algebra for these classes of deviation two ideals. These ideals are generated iteratively, and their construction has combinatorial motivation.

\vspace{2mm}

Another important theme of this article is the study of the asymptotic properties of powers of ideals. Among various homological invariants, the Castelnuovo–Mumford regularity of an ideal is particularly significant, as it provides bounds on the degrees of its syzygies. The regularity of powers of complete intersections, which are ideals of deviation zero, is well understood. The authors in \cite{DBDE75} show that the powers of a complete intersection ideal are determinantal ideals, and hence the minimal free resolution of these powers can be obtained using the Eagon-Northcott complex. Moreover, in the case of a graded complete intersection in a polynomial ring, the authors in \cite{GV} provide the explicit graded Betti numbers of its powers. Similarly, the regularity of powers of almost complete intersections (ideals of deviation one) has been studied in \cite{Shen&Zhu, NKCV3}. 

\vspace{2mm}

In this article, we study the regularity of powers to ideals of deviation two. In Theorem~\ref{powers_aci}, we provide expressions for the regularity of powers of deviation two ideals that are generated by $d$-sequences and satisfy certain structural conditions. Another important class of deviation two ideals considered in Section \ref{Section2} is the Huneke-Ulrich ideals (see Subsection~\ref{HU Ideals} for the definition). These are non-trivial Gorenstein ideals of deviation two and grade at least four \cite{Huneke&Ulrich}, and they exhibit several notable properties, including being generated by $d$-sequences, as discussed in Proposition \ref{HU_prop}. In Theorem\ref{Reg_powers_HU}, we establish bounds on the regularity of powers of this class of ideals.

\vspace{2mm}

Throughout the article, $K$ denotes a field of characteristic zero and $B$ denotes a polynomial ring over the field $K$.

\section{Preliminaries} \label{prelim}

In this section, we recall some definitions and results that will be used throughout the article. We begin by reviewing the concept of $d$-sequences, which play a central role in the study of ideals of linear type.

\vspace{2mm}

Let $\{ \bf{a} \}$ denote a sequence $\{ a_1, \ldots, a_n \}$ in a graded ring $A$ and $N$ be a finitely generated graded $A$-module. Let $I=\langle a_1, \ldots, a_n\rangle$, $I_i=\langle a_1, \ldots, a_i \rangle$ and set $I_0$ to be the zero ideal. Then the sequence $\{\bf{a}\}$ is a \textit{$d$-sequence} for $N$ if for each integer $i=1, \ldots, n$ and each integer $k=i, \ldots, n$, the following conditions hold (cf. \cite{CH1})
\begin{enumerate}[(a)]
    \item $a_i \notin \langle a_1, \ldots, \hat{a_i}, \ldots, a_n \rangle$,
    \item $(I_{i-1}N:_Na_ia_k) = (I_{i-1}N:_Na_k)$.
\end{enumerate}

This notion generalizes regular sequences and has significant implications for the structure of Rees and symmetric algebras. Numerous examples of $d$-sequences can be found in \cite{CH1}. An important result, due to Huneke \cite{CH} and independently proved by Valla \cite{Valla}, states that the ideals generated by $d$-sequences are of linear type, that is, their Rees algebra coincides with their symmetric algebra.

\begin{note} \label{properties}
The following are some properties of $d$-sequences and regular sequences that will be used in this article.
\begin{enumerate} [(i)]
\item \label{p11} Let $\{f_1, \ldots, f_n\}$ be a $d$-sequence in a ring $A$. Then, $(\langle f_1, \ldots, f_{n-1} \rangle:f_n^s)=(\langle f_1, \ldots, f_{n-1} \rangle:f_n)$ for all $s \geq 1$.
\item \label{p12}  Let $\{f_1, \ldots, f_n\}$ be a $d$-sequence in a ring $A$, and let $I= \langle f_1, \ldots, f_n \rangle$. Then, for $s \geq 1$, $${(\langle f_1, \ldots, f_{i-1} \rangle + I^s:f_i)}=(\langle f_1, \ldots, f_{i-1} \rangle:f_i) + I^{s-1}$$ for all $1 \leq i \leq n$ (\cite[Observation 2.4]{Shen&Zhu}).
\item \label{p2} If $\{f_1, \ldots, f_n\}$ forms a regular sequence, then $(\langle f_1, \ldots, f_{i-1} \rangle:f_i) = \langle f_1, \ldots, f_{i-1} \rangle$ for all $1 \leq i \leq n$.
\end{enumerate}
\end{note}

We now recall a key lemma concerning the Castelnuovo--Mumford regularity in short exact sequences. This result will be applied repeatedly in Section \ref{Section2} to obtain regularity bounds, and in some cases, to derive equalities.

\begin{lem} [Regularity Lemma] \label{reg_lemma} \cite[Lemma 3.1]{Hoa&Tam}
Let $0 \rightarrow M \rightarrow N \rightarrow P \rightarrow 0$ be a short exact sequence of finitely generated graded $A$-modules. Then, the following holds.
\begin{enumerate}[\rm (i)]
    \item \label{r1} If  $\reg(M) \neq \reg(P)+1,$ then $\reg(N)=\max  \{ \reg(M),\reg(P)\}$.
    \item \label{r2} If  $\reg(N) \neq \reg(P),$ then $\reg(M)=\max  \{ \reg(N),\reg(P)+1\}$.
    \item \label{r3} If  $\reg(M) \neq \reg(N),$ then  $\reg(P)=\max  \{ \reg(M)-1,\reg(N)\}$. 
\end{enumerate}
\end{lem}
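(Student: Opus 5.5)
The plan is to derive all three statements from the long exact sequence in local cohomology, or equivalently from the long exact sequence of $\operatorname{Tor}^A(-,K)$ when $A$ is a polynomial ring. Regularity can be read off either one: set $a^i(X)=\max\{t : H^i_{\mathfrak m}(X)_t\neq 0\}$, so that $\reg(X)=\max_i\{a^i(X)+i\}$. The short exact sequence $0\to M\to N\to P\to 0$ induces the long exact sequence
\[
\cdots\to H^{i-1}_{\mathfrak m}(P)_t\to H^i_{\mathfrak m}(M)_t\to H^i_{\mathfrak m}(N)_t\to H^i_{\mathfrak m}(P)_t\to H^{i+1}_{\mathfrak m}(M)_t\to\cdots .
\]
Reading off the vanishing of the end terms at each position gives the three standard inequalities
\[
\reg(N)\le\max\{\reg(M),\reg(P)\},\quad \reg(M)\le\max\{\reg(N),\reg(P)+1\},\quad \reg(P)\le\max\{\reg(M)-1,\reg(N)\}.
\]
The shift by one comes from the connecting map, which raises the cohomological index by one in the same degree $t$.

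To upgrade each inequality to an equality under the stated hypothesis, I would argue by contradiction, and (i) serves as the model case. Suppose $\reg(N)<\max\{\reg(M),\reg(P)\}$.
\begin{itemize}
\item If $\reg(M)>\reg(P)$, then the second inequality gives $\reg(M)\le\max\{\reg(N),\reg(P)+1\}$. Since $\reg(N)<\reg(M)$, this forces $\reg(M)\le\reg(P)+1$, hence $\reg(M)=\reg(P)+1$, which the hypothesis excludes.
\item If $\reg(P)\ge\reg(M)$, then $\reg(N)<\reg(P)$, and the third inequality gives $\reg(P)\le\max\{\reg(M)-1,\reg(N)\}<\reg(P)$, which is absurd.
\end{itemize}

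Cases (ii) and (iii) follow the same pattern. In each, the failure of equality together with the other two inequalities pins the values into the one excluded coincidence, $\reg(N)=\reg(P)$ in (ii) and $\reg(M)=\reg(N)$ in (iii).

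I expect the only delicate point to be the bookkeeping for the three inequalities themselves. In particular, one must check that the degree-$t$ piece of $H^{i}_{\mathfrak m}(M)$ is controlled by $H^{i-1}_{\mathfrak m}(P)_t$ and $H^i_{\mathfrak m}(N)_t$, and that this is exactly what produces the $+1$ in the second inequality and the $-1$ in the third. Once the inequalities are stated correctly, the case analysis is purely formal. The result is in any case the cited \cite[Lemma 3.1]{Hoa&Tam}, so the argument only records why it holds.
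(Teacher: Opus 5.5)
Your proof is correct. The three inequalities follow from the degree-preserving long exact sequence in local cohomology (or in $\operatorname{Tor}$ over a polynomial ring), and your case analysis correctly shows that in each part equality can fail only in the single excluded coincidence. The paper gives no proof of its own and only cites \cite[Lemma 3.1]{Hoa&Tam}; your argument is the standard one behind that cited result.
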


\section{Regularity of powers of ideals of deviation two generated by $d$-sequences} \label{Section2}

In this section, we give expressions for the regularity of powers of deviation two ideals generated by $d$-sequences, similar to those obtained for deviation zero ideals in \cite{DBDE75,GV} and deviation one ideals in \cite{Shen&Zhu,NKCV3}.

\begin{setup} \label{s1}
Let $A$ be a standard graded $K$-algebra and $I$ be a homogeneous ideal of $A$ generated by a $d$-sequence $\{ f_1, \ldots,f_{n}\}$ in degrees $d_1, \ldots, d_n$ with $\{f_1, \ldots,f_{n-2}\}$ being a regular sequence. Assume that $d_n$ is the maximum of the degrees $d_i, i=1, \ldots, n$ and $J=\langle f_1, \ldots,f_{n-1}\rangle $.
\end{setup}

\begin{lem} \label{lem1}
Consider the notation and assumptions as in Setup \ref{s1}. Then, for $i=0, \ldots,n-1$,
\begin{enumerate}[\rm (i)]
\item If $\reg(A/I) < \reg(A/J)$, then,
$$ \reg \left( \dfrac{A}{\langle f_1, \ldots,f_i\rangle+I^2} \right)=\reg(A/J)+ d_n - 1.$$
\item If $\reg(A/I) > \reg(A/J)$, then,
$$ \reg \left( \dfrac{A}{\langle f_1, \ldots,f_i\rangle+I^2} \right)=\reg(A/I)+ d_n.$$
\end{enumerate}
\end{lem}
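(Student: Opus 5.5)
The plan is a descending induction on $i$. I first settle the case $i=n-1$ directly, and then pass from $i+1$ to $i$ using colon short exact sequences together with Lemma \ref{reg_lemma}. Throughout, write $I_i=\langle f_1,\ldots,f_i\rangle$ and $L_i=I_i+I^2$. Note that $L_{n-1}=J+\langle f_n^2\rangle$ and $L_{i+1}=L_i+\langle f_{i+1}\rangle$.

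\emph{Base case $i=n-1$.} Consider the two exact sequences
$$0\to A/(J:f_n)(-d_n)\xrightarrow{\cdot f_n} A/J\to A/I\to 0,\qquad 0\to A/(J:f_n^2)(-2d_n)\xrightarrow{\cdot f_n^2} A/J\to A/L_{n-1}\to 0.$$
By Note \ref{properties}(\ref{p11}) we have $J:f_n^2=J:f_n$.

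In case (i), $\reg(A/J)\neq\reg(A/I)$, so Lemma \ref{reg_lemma}(\ref{r2}) applied to the first sequence gives
$$\reg(A/(J:f_n))+d_n=\max\{\reg A/J,\reg A/I+1\}=\reg A/J.$$
The kernel in the second sequence therefore has regularity $\reg A/J+d_n\neq\reg A/J$, since $d_n\ge 1$. Lemma \ref{reg_lemma}(\ref{r3}) then gives
$$\reg A/L_{n-1}=\max\{\reg A/J+d_n-1,\reg A/J\}=\reg A/J+d_n-1.$$

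In case (ii), the same two steps first give $\reg(A/(J:f_n))+d_n=\reg A/I+1$, and then
$$\reg A/L_{n-1}=\max\{\reg A/I+d_n,\reg A/J\}=\reg A/I+d_n.$$

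\emph{Inductive step.} For $i<n-1$, use the exact sequence
$$0\to A/(L_i:f_{i+1})(-d_{i+1})\to A/L_i\to A/L_{i+1}\to 0.$$
By Note \ref{properties}(\ref{p12}) with $s=2$, we have $L_i:f_{i+1}=(I_i:f_{i+1})+I$. Let $R$ denote the common value $\reg A/L_{i+1}$ from the statement. By Lemma \ref{reg_lemma}(\ref{r1}), the conclusion $\reg A/L_i=R$ will follow once the kernel has regularity at most $R$.

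For $i+1\le n-2$ this is straightforward. Since $f_1,\ldots,f_{n-2}$ is a regular sequence, Note \ref{properties}(\ref{p2}) gives $I_i:f_{i+1}=I_i\subseteq I$, so the kernel is $A/I(-d_{i+1})$. Its regularity is $\reg A/I+d_{i+1}\le\reg A/I+d_n$, because $d_n$ is maximal. In case (ii) this bound is exactly $R$. In case (i) it is at most $\reg A/J-1+d_n=R$.

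\emph{Main obstacle: the step $i=n-2$.} Here $C:=I_{n-2}:f_{n-1}$ need not equal $I_{n-2}$, so the kernel is $A/(C+I)(-d_{n-1})$, and I need the bound
$$\reg A/(C+I)+d_{n-1}\le R.$$
My first attempt is to control $C$ through the sequence
$$0\to A/C(-d_{n-1})\to A/I_{n-2}\to A/J\to 0,$$
whose middle term is a complete intersection. This relates $\reg A/C$ to $\reg A/J$ and the degrees $d_1,\ldots,d_{n-2}$.

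Next I would pass from $A/C$ to $A/(C+I)$. Since $C\supseteq I_{n-2}$, we have $C+I=C+\langle f_{n-1},f_n\rangle$. The d-sequence property gives $C\cap I=I_{n-2}$ and the analogous colon identities for $f_n$, and I would use these to write short exact sequences for adding $f_{n-1}$ and then $f_n$ to $C$. Repeated use of Lemma \ref{reg_lemma} should then give $\reg A/(C+I)\le\max\{\reg A/J,\reg A/I\}-1$ or a similar bound. Adding $d_{n-1}\le d_n$ keeps this within $R$ in both cases (i) and (ii).

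If this estimate turns out to be too weak, the fallback is to avoid the kernel bound at this step. Instead I would compare $A/L_{n-2}$ with $A/L_{n-1}$ through the ideal $I_{n-2}+\langle f_n\rangle$: add $f_n$ first, whose colon is governed by Note \ref{properties}(\ref{p12}) applied to the $d$-sequence, and only afterwards add $f_{n-1}$.
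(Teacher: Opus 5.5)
\paragraph{Verdict.} Your base case and the steps from $i+1$ to $i$ with $i+1\le n-2$ are correct, and they are exactly the paper's argument. There is a genuine gap at the step $i=n-2$, and it cannot be closed under the stated hypotheses.

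\paragraph{The gap at $i=n-2$.} You never prove the inequality $\reg A/(C+I)+d_{n-1}\le R$. It is only announced (``should then give \dots\ or a similar bound''), and the fallback reordering is not an argument either. Moreover, no argument can succeed in the generality of Setup~\ref{s1}, which allows any standard graded $K$-algebra $A$, because the conclusion itself fails at this step.

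\paragraph{Counterexample.} Take $A=K[x,t,u,v]/(xv,\,x^2u)$ with $f_1=t$, $f_2=u$, $f_3=v$, so $n=3$ and all degrees equal $1$.
\begin{itemize}
\item \emph{It satisfies Setup~\ref{s1}.} The element $t$ is $A$-regular. Checking on monomials in $A/(t)=K[x,u,v]/(xv,x^2u)$ gives $0:u=0:u^2=(x^2)$, $0:v=0:uv=(x)$ and $(u):v=(u):v^2=(x,u)$. Together with minimality, this shows $t,u,v$ is a $d$-sequence.
\item \emph{It is in case (i).} We have $\reg A/I=\reg K[x]=0<1=\reg K[x,v]/(xv)=\reg A/J$. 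The predicted value is $\reg(A/J)+d_3-1=1$.
\item \emph{Your proposed bound fails.} Here $C=(t):u=(t,x^2)$, so $A/(C+I)\cong K[x]/(x^2)$ has regularity $1$. This exceeds $\max\{\reg A/J,\reg A/I\}-1=0$.
\item \emph{The conclusion fails.} The ring $A/(\langle t\rangle+I^2)\cong K[x,u,v]/(xv,x^2u,u^2,uv,v^2)$ contains the nonzero socle element $xu$ of degree $2$. Hence its regularity is at least $2$ (in fact it equals $2$), not $1$. The same element shows $\reg A/I^2\ge 2$.
\end{itemize}

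\paragraph{How the paper gets past this step.} Its proof applies Note~\ref{properties}(\ref{p2}) to $f_{n-1}$ as well. That is, it uses that $f_1,\dots,f_{n-1}$ is a regular sequence; the proof text says so explicitly, although this is more than Setup~\ref{s1} assumes.
\begin{itemize}
\item Under that extra hypothesis, $C=\langle f_1,\dots,f_{n-2}\rangle$, and the kernel at $i=n-2$ is again $A/I(-d_{n-1})$. Your estimate $\reg A/I+d_{n-1}\le R$ then finishes the proof exactly as in the paper.
\item Without it, you need separate control of $\reg\bigl(C/\langle f_1,\dots,f_{n-2}\rangle\bigr)$. This is the kind of hypothesis the paper adds in Theorem~\ref{powers_aci} for $s\ge 3$.
\end{itemize}
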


\begin{proof}
Consider the following short exact sequence,

$$0 \longrightarrow \dfrac{A}{J:f_{n}}(-d_n) \stackrel {.f_{n}}
    \longrightarrow \dfrac{A}{J} \longrightarrow \dfrac{A}{I} \longrightarrow 0$$    

\begin{enumerate}[\rm (i)]
\item Assume that $\reg(A/I) < \reg(A/J)$. Then, 
\begin{equation} \label{eq2}
\reg \left( \dfrac{A}{J:f_{n}} \right) = \reg \left( \dfrac{A}{J} \right) - d_n.
\end{equation}

The result is now proved by descending induction on $i$. For the base case $i=n-1$, consider the following short exact sequence

$$0 \longrightarrow \dfrac{A}{(J:f_{n}^2)}(-2d_n) \stackrel {.f_{n}^2}
    \longrightarrow \dfrac{A}{J} \longrightarrow \dfrac{A}{J+\langle f_{n}^2 \rangle} \longrightarrow 0.$$

As a consequence of Note \ref{properties}(\ref{p11}), we have $\reg \left( \dfrac{A}{J:\langle f_{n}^2 \rangle} \right)=\reg \left( \dfrac{A}{J:\langle f_{n} \rangle} \right)= \reg \left( \dfrac{A}{J} \right) - d_n$. This implies that $\reg \left( \dfrac{A}{J:\langle f_{n}^2 \rangle}(-2d_n) \right)=\reg \left( \dfrac{A}{J} \right) +d_n$. Therefore, from Lemma \ref{reg_lemma}(\ref{r3}), $\reg \left( \dfrac{A}{J+\langle f_{n}^2 \rangle} \right)=\reg \left( \dfrac{A}{J} \right) +d_n-1$. Now, assume that $i \leq n-2$ and the statement holds for $i$. Consider the following short exact sequence,
\begin{equation} \label{EQ1}
0 \longrightarrow \dfrac{A}{(\langle f_1, \ldots,f_{i-1}\rangle+I^2):f_i}(-d_i) \stackrel {.f_{i}}
    \longrightarrow \dfrac{A}{\langle f_1, \ldots,f_{i-1}\rangle+I^2} \longrightarrow \dfrac{A}{\langle f_1, \ldots,f_{i}\rangle+I^2} \longrightarrow 0.
\end{equation}

Since $\{ f_1, \ldots, f_n \}$ forms a $d$-sequence with $\{ f_1, \ldots, f_{n-1} \}$ forming a regular sequence, from Note \ref{properties}(\ref{p12}), we have
$\reg \left( \dfrac{A}{(\langle f_1, \ldots, f_{i-1} \rangle + I^2):f_i} \right)(-d_i) = \reg \left( \dfrac{A}{(\langle f_1, \ldots, f_{i-1} \rangle:f_i) + I} \right)(-d_i).$ From Note \ref{properties}(\ref{p2}), this is equal to $\reg \left( \dfrac{A}{I} \right) + d_i < \reg \left( \dfrac{A}{J} \right) + d_i \leq\reg \left( \dfrac{A}{J} \right) + d_n \leq \reg \left( \dfrac{A}{\langle f_1, \ldots, f_{i} \rangle + I^2} \right)+1$ (induction hypothesis). Hence, from Lemma \ref{reg_lemma}(\ref{r1}), 

$$\reg \left(\dfrac{A}{\langle f_1, \ldots, f_{i-1} \rangle + I^2} \right) = \reg \left( \dfrac{A}{\langle f_1, \ldots, f_{i} \rangle + I^2} \right) = \reg \left( \dfrac{A}{J} \right) +d_n-1.$$
\item Assume that $\reg(A/I) > \reg(A/J)$. Then, following the idea as in the previous case, one observes the following:
 \begin{equation} \label{eq2.1}
\reg \left( \dfrac{A}{J:f_{n}} \right) = \reg \left( \dfrac{A}{I} \right) - d_n+1
\end{equation}
and 
\begin{equation} \label{}
\reg \left( \dfrac{A}{J+\langle f_{n}^2 \rangle} \right)=\reg \left( \dfrac{A}{I} \right) +d_n.
\end{equation}

The above equations of regularity of modules along with the short exact sequence (\ref{EQ1}) give the required equality. 
\end{enumerate}
\end{proof}

\begin{lem} \label{lem2}
Consider the notation and assumptions as in Setup \ref{s1}. Then, for $s \geq 2$ and $i=0, \ldots,n-1$, 
\begin{enumerate}[\rm (i)]
\item If $\reg(A/I) < \reg(A/J)$, then,
$$ \reg \left( \dfrac{A}{(J:f_n)+\langle f_n^s \rangle} \right)=\reg(A/J)+ (s-1)d_n - 1.$$
\item If $\reg(A/I) > \reg(A/J)$, then,
$$ \reg \left( \dfrac{A}{(J:f_n)+\langle f_n^s \rangle} \right)=\reg(A/I)+ (s-1)d_n.$$
\end{enumerate}
\end{lem}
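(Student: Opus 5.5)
The plan is to run the same short exact sequence argument as in the base case of Lemma \ref{lem1}, now with the colon ideal $J:f_n$ in place of $J$. The index $i$ plays no role in the statement, so there is only one quantity to compute. Consider the short exact sequence
$$0 \longrightarrow \dfrac{A}{(J:f_n):f_n^s}(-sd_n) \stackrel{.f_n^s}{\longrightarrow} \dfrac{A}{J:f_n} \longrightarrow \dfrac{A}{(J:f_n)+\langle f_n^s\rangle} \longrightarrow 0.$$

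The first step is to identify the left-hand module. We have $(J:f_n):f_n^s = J:f_n^{s+1}$. By Note \ref{properties}(\ref{p11}), applied to the $d$-sequence $\{f_1,\ldots,f_n\}$, this equals $J:f_n$. Hence the left module is $\frac{A}{J:f_n}(-sd_n)$, and its regularity is $\reg(A/(J:f_n)) + sd_n$.

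Since $d_n \geq 1$ and $s \geq 2$, this is strictly larger than the regularity $\reg(A/(J:f_n))$ of the middle term. So Lemma \ref{reg_lemma}(\ref{r3}) applies and gives
$$\reg\left(\dfrac{A}{(J:f_n)+\langle f_n^s\rangle}\right) = \max\{\reg(A/(J:f_n))+sd_n-1,\ \reg(A/(J:f_n))\} = \reg(A/(J:f_n)) + sd_n - 1.$$

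It remains to substitute the value of $\reg(A/(J:f_n))$ recorded in the proof of Lemma \ref{lem1}.
\begin{enumerate}[\rm (i)]
\item In case (i), equation (\ref{eq2}) gives $\reg(A/(J:f_n)) = \reg(A/J) - d_n$. Substituting yields $\reg(A/J) + (s-1)d_n - 1$.
\item In case (ii), equation (\ref{eq2.1}) gives $\reg(A/(J:f_n)) = \reg(A/I) - d_n + 1$. Substituting yields $\reg(A/I) + (s-1)d_n$.
\end{enumerate}

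The only delicate point, which I expect to be the main thing to check, is the justification of (\ref{eq2}) and (\ref{eq2.1}) themselves. Both come from Lemma \ref{reg_lemma} applied to $0 \to \frac{A}{J:f_n}(-d_n) \to A/J \to A/I \to 0$.
\begin{itemize}
\item In case (i), $\reg(A/J) \neq \reg(A/I)$, so part (\ref{r2}) gives $\reg(A/(J:f_n)) + d_n = \max\{\reg(A/J), \reg(A/I)+1\}$. Since $\reg(A/I) < \reg(A/J)$ means $\reg(A/I)+1 \le \reg(A/J)$, this maximum is $\reg(A/J)$.
\item In case (ii), the same part (\ref{r2}) gives the maximum $\reg(A/I)+1$.
\end{itemize}
One should also note the tacit assumption that $J:f_n$ is a proper ideal. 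This holds because $f_n \notin J$ by the $d$-sequence condition (a), so all modules above are nonzero and their regularities are finite.
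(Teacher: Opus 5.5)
Your proposal is correct and is essentially the paper's proof. It uses the same short exact sequence given by multiplication by $f_n^s$ on $A/(J:f_n)$, Note~\ref{properties}(\ref{p11}) to replace $J:f_n^{s+1}$ by $J:f_n$, Lemma~\ref{reg_lemma}(\ref{r3}), and then substitution of (\ref{eq2}) or (\ref{eq2.1}). Your extra derivation of (\ref{eq2}) and (\ref{eq2.1}) from Lemma~\ref{reg_lemma}(\ref{r2}), and your remark that $f_n\notin J$ keeps all modules nonzero, are correct and make explicit what the paper leaves implicit.
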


\begin{proof}

Consider the following short exact sequence
 $$0 \longrightarrow \dfrac{A}{J:f_n^{s+1}}(-sd_n) \stackrel {.f_{n}^s}
    \longrightarrow \dfrac{A}{J:f_n} \longrightarrow \dfrac{A}{(J:f_n)+\langle f_n^s \rangle} \longrightarrow 0.$$

First we claim that,
$$ \reg \left( \dfrac{A}{(J:f_n)+\langle f_n^s \rangle} \right)=\reg \left( \dfrac{A}{J:f_n} \right)+sd_n-1.$$

From Note \ref{properties}(\ref{p11}), one obtains $\reg \left( \dfrac{A}{J:f_n^{s+1}} \right) (-sd_n)=\reg \left( \dfrac{A}{J:f_n} \right)+sd_n$. Then from Lemma \ref{reg_lemma}(\ref{r3}), 
\begin{equation} \label{eq3}
\reg \left( \dfrac{A}{(J:f_n)+\langle f_n^s \rangle} \right)=\reg \left( \dfrac{A}{J:f_n} \right)+sd_n-1.
\end{equation}

Now substituting the value of $\reg \left( \dfrac{A}{J:f_n} \right)$ from Equation (\ref{eq2}) (if $\reg(A/I) < \reg(A/J)$ ) or Equation (\ref{eq2.1}) (if $\reg(A/I) > \reg(A/J)$ ) in Equation (\ref{eq3}), we get the required results.
\end{proof}

The following lemma, whose proof is immediate, will be used in this section to imply an important isomophism of modules.

\begin{lem} \label{Genlem}
Let $A$ be a commutative ring, and let $M$, $N$, and $P$ be $A$-modules such that $N$ is a submodule of $M$. Define
$$
\psi : \frac{M}{N} \longrightarrow \frac{M+P}{N+P}, 
\qquad \psi(\alpha + N) = \alpha + N + P \quad \text{for all } \alpha \in M.
$$
Then $\psi$ is a surjective $A$-module homomorphism, and its kernel is given by $\ker(\psi) = \dfrac{N + (M \cap P)}{N}$.
\end{lem}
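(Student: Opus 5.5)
The map $\psi$ is the composite of two maps: the inclusion $M \hookrightarrow M+P$, followed by the canonical projection $M+P \to (M+P)/(N+P)$. We first note that $N+P \subseteq M+P$, so the target quotient makes sense. The composite $M \to (M+P)/(N+P)$ sends $N$ into $N+P$, so it factors through $M/N$. This gives $\psi$ and shows it is well defined and $A$-linear.

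For surjectivity, take any element of the target and write it as $\alpha+p+(N+P)$ with $\alpha\in M$ and $p\in P$. Since $p\in N+P$, this class equals $\alpha+(N+P)=\psi(\alpha+N)$.

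For the kernel, we have $\psi(\alpha+N)=0$ if and only if $\alpha\in M\cap(N+P)$. The key step is the modular law: because $N\subseteq M$,
$$M\cap(N+P)=N+(M\cap P).$$
To prove the inclusion $\supseteq$, note that $N\subseteq M\cap(N+P)$ and $M\cap P\subseteq M\cap(N+P)$. To prove $\subseteq$, write $\alpha=n+p$ with $\alpha\in M$, $n\in N$, $p\in P$. Then $p=\alpha-n\in M$, because $N\subseteq M$. Hence $p\in M\cap P$, and so $\alpha\in N+(M\cap P)$.

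Passing to the quotient by $N$ gives $\ker\psi=(N+(M\cap P))/N$. The only point needing any care is the modular law, and there the hypothesis $N\subseteq M$ is exactly what is needed.
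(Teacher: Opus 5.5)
Your proof is correct and is exactly the routine verification the paper has in mind when it calls the proof ``immediate'' (the paper omits the argument): well-definedness via the composite $M\to M+P\to (M+P)/(N+P)$, surjectivity by absorbing the $P$-component, and the kernel via the modular law $M\cap(N+P)=N+(M\cap P)$, which is where $N\subseteq M$ is used. As you tacitly assume, $M$, $N$, $P$ must be submodules of a common ambient module for $M+P$ to make sense; in the paper's application they are all ideals of $A$.
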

We now proceed to prove one of the main results of this section.

\begin{thm} \label{powers_aci}
   Let $A$ be a standard graded polynomial ring over an infinite field $K$ and $I$ be a homogeneous ideal of $A$ generated by a $d$-sequence $\{ f_1, \ldots,f_{n}\}$ in degrees $d_1, \ldots, d_n$, where $\{f_1, \ldots,f_{n-2}\}$ is a regular sequence and $J=\langle f_1, \ldots,f_{n-1} \rangle$. Assume that $d_n$ is the maximum among the degrees $d_i, i=1, \ldots,n$. 
  Then for all $s \geq 2$ and $i=0,\ldots, n-1$, the following is true.
\begin{enumerate}[\rm (i)]
\item If $\reg(A/I) < \reg(A/J)$ and $\reg \left( \dfrac{(\langle f_1, \ldots, f_{n-2} \rangle:f_{n-1})}{\langle f_1, \ldots,f_{n-2} \rangle} \right)<\reg \left( \dfrac{A}{J} \right)+d_n-1$, then
$$\reg \left( \dfrac{A}{\langle f_1,\ldots,f_{i} \rangle+ I^s}  \right)=\reg(A/J)+ (s-1)d_n - 1.$$
\item If $\reg(A/I) > \reg(A/J)$ and $\reg \left( \dfrac{(\langle f_1, \ldots, f_{n-2} \rangle:f_{n-1})}{\langle f_1, \ldots,f_{n-2} \rangle} \right)<\reg \left( \dfrac{A}{I} \right)+d_n$, then
$$ \reg \left( \dfrac{A}{\langle f_1,\ldots,f_{i} \rangle+ I^s}  \right)=\reg(A/I)+ (s-1)d_n.$$
\end{enumerate}
\end{thm}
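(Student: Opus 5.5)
Write $R_i^s=\langle f_1,\ldots,f_i\rangle+I^s$ and set $c=\reg(A/J)-1$ in case (i), $c=\reg(A/I)$ in case (ii). The target is then $\reg(A/R_i^s)=c+(s-1)d_n$. The plan is a double induction: ascending induction on $s$, with Lemma \ref{lem1} as the base case $s=2$, and inside each $s$ a descending induction on $i$ from $i=n-1$ down to $i=0$. This mirrors the proof of Lemma \ref{lem1}.

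\emph{Step for $i=n-1$.} Here $R_{n-1}^s=J+\langle f_n^s\rangle$, since every monomial in $f_1,\dots,f_n$ of degree $s$ other than $f_n^s$ already lies in $J$. The multiplication-by-$f_n^s$ sequence
$$0\to \frac{A}{J:f_n^s}(-sd_n)\to \frac{A}{J}\to \frac{A}{J+\langle f_n^s\rangle}\to 0$$
gives what we need. By Note \ref{properties}(\ref{p11}) together with (\ref{eq2}) or (\ref{eq2.1}), the left term has regularity $\reg(A/J)+(s-1)d_n$ in case (i) and $\reg(A/I)+(s-1)d_n+1$ in case (ii). In both cases this differs from $\reg(A/J)$: in case (i) because $s\ge2$, and in case (ii) because $\reg(A/I)>\reg(A/J)$. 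Lemma \ref{reg_lemma}(\ref{r3}) then yields $c+(s-1)d_n$.

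An alternative that uses Lemma \ref{lem2} directly is the sequence $0\to \frac{A}{J:f_n}(-d_n)\to A/J\to\cdots$. I would choose whichever route makes the regularity comparison cleanest.

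\emph{Descending step in $i$.} For $1\le i\le n-1$, use the sequence (\ref{EQ1}) with $I^2$ replaced by $I^s$:
$$0\to \frac{A}{R_{i-1}^s:f_i}(-d_i)\to \frac{A}{R_{i-1}^s}\to \frac{A}{R_i^s}\to 0 .$$
By Note \ref{properties}(\ref{p12}), $R_{i-1}^s:f_i=(\langle f_1,\ldots,f_{i-1}\rangle:f_i)+I^{s-1}$. For $i\le n-2$, Note \ref{properties}(\ref{p2}) reduces this to $R_{i-1}^{s-1}$. The induction hypothesis in $s$ then gives the left term regularity $c+(s-2)d_n+d_i\le c+(s-1)d_n$. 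This is at most $\reg(A/R_i^s)$, so it is strictly less than $\reg(A/R_i^s)+1$. Lemma \ref{reg_lemma}(\ref{r1}) then gives $\reg(A/R_{i-1}^s)=\reg(A/R_i^s)$. For $s=2$, the quotient $A/I^{1}=A/I$ has regularity below $\reg(A/J)$ in case (i), or equal to $c$ in case (ii), so the same estimate holds.

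\emph{The case $i=n-1$ of this step.} This is the main obstacle, because $f_1,\dots,f_{n-1}$ need not be regular, and the colon is now $L+I^{s-1}$ with $L=\langle f_1,\ldots,f_{n-2}\rangle:f_{n-1}$. To handle it, I would use Lemma \ref{Genlem} with $M=L$, $N=\langle f_1,\ldots,f_{n-2}\rangle$ and $P=R_{n-2}^{s-1}$. This gives a short exact sequence
$$0\to \frac{N+(L\cap P)}{N}\to \frac{L}{N}\to \frac{L+I^{s-1}}{R_{n-2}^{s-1}}\to 0,$$
together with
$$0\to \frac{L+I^{s-1}}{R_{n-2}^{s-1}}\to \frac{A}{R_{n-2}^{s-1}}\to \frac{A}{L+I^{s-1}}\to 0.$$
The hypothesis on $\reg(L/N)$, which is strictly below $c+d_n$, is exactly what makes $\reg(A/(L+I^{s-1}))$ small. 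I would first try to show $L\cap P\subseteq N$; this is plausible by the $d$-sequence property, since $L\cap I\subseteq N$-type statements hold for $d$-sequences. Then the kernel vanishes and $\reg\bigl((L+I^{s-1})/R_{n-2}^{s-1}\bigr)=\reg(L/N)$. The regularity lemma then bounds $\reg(A/(L+I^{s-1}))$ by $\max\{c+(s-2)d_n,\ \reg(L/N)-1\}$, which is less than $c+(s-1)d_n$ because $\reg(L/N)<c+d_n$ (for $s=2$, the first term of the max is replaced by $\reg(A/I)\le c$). After the shift by $d_{n-1}\le d_n$, the left term of (\ref{EQ1}) has regularity at most $c+(s-1)d_n-1$. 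This is strictly less than $\reg(A/R_{n-1}^s)+1$, so Lemma \ref{reg_lemma}(\ref{r1}) transfers the value down to $i=n-2$. The case $i=0$ gives $I^s$ itself.
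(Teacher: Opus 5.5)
Your proposal is correct and is essentially the paper's own argument: induction on $s$ from Lemma \ref{lem1}, and descending induction on $i$ through the multiplication sequences using Note \ref{properties}(\ref{p12}) and (\ref{p2}). At the critical index you use the $d$-sequence identity $L\cap I=N$ (plus the modular law, as $N\subseteq L$) to get $(L+I^{s-1})/(N+I^{s-1})\cong L/N$ and apply the hypothesis on $\reg(L/N)$. Your direct treatment of $J+\langle f_n^s\rangle$ via multiplication by $f_n^s$ on $A/J$ is a harmless alternative to the paper's route through Lemma \ref{lem2}. Two small corrections. First, for $s\ge 3$ your maximum is at most $c+(s-2)d_n$, so after the shift the left term has regularity at most $c+(s-1)d_n=\reg(A/R_{n-1}^s)$, not $c+(s-1)d_n-1$ (which is false when $d_{n-1}=d_n$); this weaker bound already suffices for Lemma \ref{reg_lemma}(\ref{r1}). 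Second, drop your $s=2$ parentheticals: Lemma \ref{lem1} covers that case, and the one in your last step does not give the needed bound once $d_{n-1}\ge 3$.
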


\begin{proof}
We prove by induction on $s \geq 2$. The base case for $s=2$ is a consequence of Lemma \ref{lem1}. Let $s \geq 3$.
For all $i,s \in \NN $ with $i<n$, we look at the following short exact sequence
\begin{equation} \label{eq1}
0 \longrightarrow \dfrac{A}{(\langle f_1, \ldots,f_{i}\rangle+I^s):f_{i+1}}(-d_{i+1}) \stackrel {.f_{i+1}}
    \longrightarrow \dfrac{A}{\langle f_1, \ldots,f_{i}\rangle+I^s} \longrightarrow \dfrac{A}{\langle f_1, \ldots,f_{i+1}\rangle+I^s} \longrightarrow 0.
\end{equation}

    We prove the result by descending induction on $i$. Consider the base case $i=n-1$ in Equation (\ref{eq1}). Since, $\langle f_1, \ldots,f_{(n-1)+1}\rangle+I^s=I$, from Note \ref{properties}(\ref{p12}) and Lemma \ref{lem2}, $$\reg \left( \dfrac{A}{(J+I^s):f_n}(-d_n) \right) =\reg \left( \dfrac{A}{(J:f_n)+I^{s-1}}\right)+d_n =\reg \left( \dfrac{A}{(J:f_n)+\langle f_n^{s-1} \rangle}\right)+d_n.$$

  \noindent{\textbf{Proof of (i)}:} Since $\reg(A/I) < \reg(A/J)$, $\reg \left( \dfrac{A}{(J:f_n)+\langle f_n^{s-1} \rangle}\right)+d_n=\reg(A/J)+ (s-2)d_n - 1+d_n  > \reg \left(  \dfrac{A}{I}\right)+1$. Hence $\reg \left(\dfrac{A}{J+I^s}\right)=\reg(A/J)+ (s-2)d_n - 1$ by Lemma \ref{reg_lemma}(\ref{r1}). Now assume $i \leq n-2$ and that the result holds for $i+1$.  In the short exact sequence (\ref{eq1}), by induction hypothesis on $i$, $$\reg \left(\dfrac{A}{\langle f_1, \ldots,f_{i+1}\rangle+I^s}\right)=\reg(A/J)+ (s-1)d_n - 1.$$ Since $\{f_1, \ldots,f_i \}$ forms a $d$-sequence for $1 \leq i \leq n$, from Note \ref{properties}(\ref{p12}),  $$\reg \left(\dfrac{A}{(\langle f_1, \ldots,f_{i}\rangle+I^s):f_{i+1}}\right)=\reg \left(\dfrac{A}{(\langle f_1, \ldots,f_{i}\rangle : f_{i+1})+I^{s-1}} \right).$$ Consider the following short exact sequence,
\begin{equation} \label{ses_inequality_reg}
0 \longrightarrow \langle f_1, \ldots,f_{n-2} \rangle+I^{s-1} \xhookrightarrow{}
     (\langle f_1, \ldots,f_{n-2} \rangle : f_{n-1})+I^{s-1} \twoheadrightarrow \dfrac{(\langle f_1, \ldots,f_{n-2} \rangle : f_{n-1})+I^{s-1}}{\langle f_1, \ldots,f_{n-2} \rangle+I^{s-1}} \longrightarrow 0.
\end{equation}
Now, since $\{f_1, \ldots, f_{n-1}\}$ is a $d$-sequence,  
$$(\langle f_1, \ldots,f_{n-2} \rangle : f_{n-1}) \cap I^{s-1} \subseteq (\langle f_1, \ldots,f_{n-2} \rangle : f_{n-1}) \cap I = \langle f_1, \ldots,f_{n-2} \rangle.$$ Thus applying Lemma \ref{Genlem} with $M=(\langle f_1, \ldots,f_{n-2} \rangle : f_{n-1})+I^{s-1}$, $N=\langle f_1, \ldots,f_{n-2} \rangle$ and $P=I^{s-1}$, one obtains the following isomorphism, $$\dfrac{(\langle f_1, \ldots,f_{n-2} \rangle : f_{n-1})+I^{s-1}}{\langle f_1, \ldots,f_{n-2} \rangle+I^{s-1}} 
 \cong \dfrac{(\langle f_1, \ldots,f_{n-2} \rangle : f_{n-1})}{\langle f_1, \ldots,f_{n-2} \rangle}.$$ Since $\reg \left( \dfrac{(\langle f_1, \ldots, f_{n-2} \rangle:f_{n-1})}{\langle f_1, \ldots,f_{n-2} \rangle} \right)<\reg \left( \dfrac{A}{J} \right)+d_n-1$, this implies $$\reg \left(\dfrac{A}{(\langle f_1, \ldots,f_{n-2}\rangle : f_{n-1})+I^{s-1}} \right)=
\reg \left( \dfrac{A}{\langle f_1, \ldots,f_{n-2} \rangle+I^{s-1}} \right).$$ Thus we obtain, $$\reg \left(\dfrac{A}{(\langle f_1, \ldots,f_{n-2}\rangle : f_{n-1})+I^{s-1}} \right)+d_{n-1}=\reg(A/J)+ (s-2)d_n - 1+d_{n-1} < \reg(A/J)+ (s-1)d_n - 1+1.$$ Then, by Lemma \ref{reg_lemma}(\ref{r1}), one obtains the required result for $i=n-2$ and $s > 2$. The remaining cases of $i<n-2$ can be seen as consequence of the fact that $f_1, \ldots, f_{n-2}$ forms a regular sequence and hence $$\reg \left(\dfrac{A}{(\langle f_1, \ldots,f_{i}\rangle+I^s):f_{i+1}}\right)=\reg \left(\dfrac{A}{\langle f_1, \ldots,f_{i}\rangle+I^{s-1}} \right)=\reg(A/J)+ (s-1)d_n - 1$$.

\vspace{1mm}

The case (ii) follows by arguments analogous to the first.
\end{proof}

It is important to note that the above theorem does not address the case when $\reg(A/I) = \reg(A/J)$. In the following subsection, we examine one such class of ideals, known as the Huneke–Ulrich ideals.

\subsection{Regularity of powers of Huneke-Ulrich Ideals} \label{HU Ideals} 
In this subsection, we study the regularity of powers of Huneke-Ulrich ideals which are a family of Gorenstein ideals of deviation two. 

\vspace{2mm}

Let  
$
X=\begin{bmatrix}
0            & x_{12}     & \ldots & x_{1\,n-1}\\
-x_{12}      & 0          & \ldots & x_{2\,n-1}\\
\vdots       & \vdots     & \ddots & \vdots\\
-x_{1\,n-1}  & -x_{2\,n-1}& \ldots & 0
\end{bmatrix}
$
be a generic skew-symmetric matrix of even order $n-1$, and let  
$
Y=\begin{bmatrix}
y_1 & y_2 & \ldots & y_{n-1}
\end{bmatrix}
$
be a generic $1 \times (n-1)$ matrix. Denote by $\pf(X)$ the Pfaffian of $X$ of order $n-1$, defined as the square root of the determinant of $X$, and let $I_1(YX)$ be the ideal generated by the entries of the product $YX$. Set  
\[
J_{(n)} = I_1(YX) + \langle \pf(X) \rangle \subseteq B = K[X,Y],
\]
where $K$ is a field. Then the ideal $J_{(n)}$ is called the \emph{Huneke--Ulrich ideal} \cite{Huneke&Ulrich} corresponding to the matrix $X$. The following are some notable properties of the generators of this class of ideals.

\begin{enumerate}[(a)]
\item The ideal $J_{(n)}$ is a Gorenstein perfect prime ideal of grade $n-2$ minimally generated by $n$ elements \cite[Proposition 5.9]{Huneke&Ulrich}. 
\item The ideal $I_1(YX)=\{f_1, \ldots,f_{n-2}, f_{n-1}\}$, is a perfect prime ideal and an almost complete intersection of grade $n-2$ in $B$ \cite[Proposition 5.8]{Huneke&Ulrich}.
\item The sequence $\{f_1, \ldots, f_{n-2}\}$ forms a regular sequence in $B$ \cite[Pg 266]{Kustin}.
\end{enumerate}

\begin{prop} \label{HU_prop}
Let $n \geq 5$ be an odd integer and $J_{(n)}=\langle f_1, \ldots, f_{n-1},f_{n}\rangle$ be the Huneke-Ulrich ideal corresponding to a generic skew symmetric matrix $X$ of order $(n-1) \times (n-1)$ and a generic matrix $Y$ of order $1 \times (n-1)$. Then the generators have the following properties:
\begin{enumerate}[\rm (i)]
    \item \label{hu1.} $(\langle f_1, \ldots, f_{n-1} \rangle:f_{n})=\langle y_{1}, \ldots, y_{n-1} \rangle $.
    \item \label{hu2.} $(\langle f_1, \ldots, f_{n-2} \rangle:f_{n-1})=\langle y_{n-1}, f_1 \ldots, f_{n-2} \rangle.$
    \item \label{hu3.} $(\langle f_1, \ldots, f_{n-2} \rangle:y_{n-1})=\langle f_1 \ldots, f_{n} \rangle.$
    \item \label{hu4.} $\{f_1, \ldots, f_{n-1}\}$ forms a $d$-sequence.
    \item \label{hu5.} $\{f_1, \ldots,f_n \}$ forms a $d$-sequence.
\end{enumerate}
\end{prop}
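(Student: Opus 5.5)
We will work with explicit generators. Let $X$ have order $m=n-1$ (even), write $f_j=(YX)_j=\sum_k y_k x_{kj}$ for $j=1,\ldots,n-1$, and let $f_n=\pf(X)$. The basic identities we need are these. First, $YX\cdot Y^T=0$ by skew-symmetry, that is, $\sum_j y_j f_j=0$. Second, $X\cdot\operatorname{adj}$-type relations for Pfaffians: with $p_j$ the signed submaximal Pfaffians, $Xp=0$ and $x_{ij}$-expansions give $\pf(X)\,y_k\in I_1(YX)$. Indeed, $\sum_j f_j p'_{jk}=\pf(X)y_k$ up to sign, where $p'$ is the classical adjoint-type matrix with $X P'=\pf(X)\,\mathrm{Id}$. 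This identity gives the inclusion $\supseteq$ in (\ref{hu1.}), and the relation $\sum y_jf_j=0$ gives $y_{n-1}f_{n-1}\in\langle f_1,\ldots,f_{n-2}\rangle$, which is the inclusion $\supseteq$ in (\ref{hu2.}). Similarly, we get $\langle f_1,\ldots,f_n\rangle y_{n-1}\subseteq\langle f_1,\ldots,f_{n-2}\rangle$ for (\ref{hu3.}), since $f_n y_{n-1}\in I_1(YX)$ and $f_{n-1}y_{n-1}\in\langle f_1,\ldots,f_{n-2}\rangle$. The $f_n$ term needs care: we will use the Pfaffian expansion to write $\pf(X)y_{n-1}$ modulo $f_1,\ldots,f_{n-2}$ as a multiple of $f_{n-1}$, then multiply by $y_{n-1}$ again, or else argue via (\ref{hu2.}).

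For the reverse inclusions we will use primeness and grade. In (\ref{hu1.}), the ideal $\langle y_1,\ldots,y_{n-1}\rangle$ is prime and contains $J=I_1(YX)$, and $J$ is perfect of grade $n-2$ with $J\ne J_{(n)}$. The colon $J:f_n$ is unmixed of height $n-2$ or larger since $J$ is perfect and $f_n\notin J$. Hence every associated prime of $J:f_n$ is an associated prime of $J$. But $J$ is prime (fact (b)), so $J:f_n$ would equal $J$ unless it is the unit ideal. This contradicts the $\supseteq$ inclusion, because $y_k\notin J$ by degrees. We therefore cannot argue this way, and need to recheck: in fact fact (b) likely means $J$ is prime only in a localized sense. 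So instead we plan a linkage argument. The ideal $\langle f_1,\ldots,f_{n-2}\rangle$ is a complete intersection $\mathfrak a$, and $\mathfrak a:f_{n-1}$ is the link of $J$. By (\ref{hu2.}), $\mathfrak a=\langle y_{n-1}\rangle\cap\ldots$ has two components, namely $J_{(n)}$ and $\langle y_{n-1}\rangle+\mathfrak a$. We will prove (\ref{hu2.}) and (\ref{hu3.}) together: $\mathfrak a$ is unmixed of height $n-2$, and we compute its minimal primes by localizing at $y_{n-1}$ (there $f_{n-1}$ becomes dependent and $\mathfrak a_{y_{n-1}}=J_{y_{n-1}}=(J_{(n)})_{y_{n-1}}$) and at primes containing $y_{n-1}$. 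This gives $\mathfrak a=J_{(n)}\cap(\mathfrak a+\langle y_{n-1}\rangle)$, up to checking that $\mathfrak a+\langle y_{n-1}\rangle$ is unmixed of height $n-2$. Standard linkage then yields $\mathfrak a:y_{n-1}=J_{(n)}$ and $\mathfrak a:J_{(n)}=\mathfrak a+\langle y_{n-1}\rangle$. From these, $\mathfrak a:f_{n-1}$ follows because $f_{n-1}$ lies in $J_{(n)}$ but avoids the other component. For (\ref{hu1.}) we use that $J_{(n)}$ is Gorenstein of height $n-2$ and that $J:f_n$ is the link data. Alternatively, localizing at the prime $\langle y\rangle$ shows that $J:f_n$ is contained in the $\langle y\rangle$-primary component; since $J=J_{(n)}\cap\langle y_1,\ldots,y_{n-1}\rangle$ (obtained by similar localization and heights), we get $J:f_n=\langle y\rangle$ because $f_n\notin\langle y\rangle$.

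For the $d$-sequence statements (\ref{hu4.}) and (\ref{hu5.}), we check the conditions with $f_1,\ldots,f_{n-2}$ regular. Condition (b) for $i\le n-2$ reduces to $\langle f_1,\ldots,f_{i-1}\rangle:f_if_k=\langle f_1,\ldots,f_{i-1}\rangle:f_k$ with $f_i$ regular modulo the earlier elements, and this holds trivially. For $i=n-1$ we need $\mathfrak a:f_{n-1}^2=\mathfrak a:f_{n-1}$ and $\mathfrak a:f_{n-1}f_n=\mathfrak a:f_n$. Using (\ref{hu2.}) and (\ref{hu3.}), we get $\mathfrak a:f_{n-1}^2=(\mathfrak a+\langle y_{n-1}\rangle):f_{n-1}$. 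This equals $\mathfrak a+\langle y_{n-1}\rangle$ provided $f_{n-1}$ is regular on $B/(\mathfrak a+\langle y_{n-1}\rangle)$, which we will verify by showing that $f_{n-1}$ avoids that component's primes. For $i=n$ we need $J:f_n^2=J:f_n$, which is immediate from (\ref{hu1.}) since $f_n\notin\langle y\rangle$ and $\langle y\rangle$ is prime. Condition (a), minimality of generators, holds by degree considerations. The main obstacle is the precise primary decomposition $\mathfrak a=J_{(n)}\cap(\mathfrak a+\langle y_{n-1}\rangle)$ together with the unmixedness needed in the linkage step. We will attack it first, via localization at $y_{n-1}$ and a height count for $\mathfrak a+\langle y_{n-1}\rangle$, which is the Huneke–Ulrich almost complete intersection structure specialized at $y_{n-1}=0$.
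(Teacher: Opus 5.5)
\textbf{Gap: unmixedness of} $\mathfrak a+\langle y_{n-1}\rangle$. The gap is the step you call the main obstacle, and the tool you propose for it cannot close it. Put $\mathfrak a=\langle f_1,\ldots,f_{n-2}\rangle$ and $\mathfrak b=\mathfrak a+\langle y_{n-1}\rangle$.

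Multiplication by $x$ embeds $B/(\mathfrak a:x)$ into $B/\mathfrak a$, so every proper colon of the complete intersection $\mathfrak a$ is unmixed of height $n-2$. Hence (ii) \emph{forces} $\mathfrak b$ to be unmixed. Likewise, your check $\mathfrak a:f_{n-1}^2=\mathfrak a:f_{n-1}$ needs $f_{n-1}$ to avoid every associated prime of $\mathfrak b$. Localizing at $y_{n-1}$ and counting heights only sees minimal primes, so it cannot exclude embedded components of $\mathfrak b$. The paper does not prove this either: it simply asserts that $\mathfrak b$ is prime and then uses $\mathfrak a:f_{n-1}\subseteq\mathfrak b:f_{n-1}=\mathfrak b$.

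One way to close it inside your framework is to run the linkage in the other direction.
\begin{itemize}
\item Since $J_{(n)}$ is Gorenstein and contains the complete intersection $\mathfrak a$ of the same height, $L=\mathfrak a:J_{(n)}$ is perfect of height $n-2$.
\item Moreover $L/\mathfrak a\cong\omega_{B/J_{(n)}}$ (up to shift) is cyclic, generated by one bihomogeneous class $\bar w$ for the grading $\deg x_{ij}=(1,0)$, $\deg y_k=(0,1)$.
\item Since $y_{n-1}\in L\setminus\mathfrak a$, we have $\bar y_{n-1}=c\,\bar w$ with $c$ bihomogeneous. Bidegrees are nonnegative, and $w$ cannot be a unit because $L\subseteq\mathfrak a:f_{n-1}\neq B$. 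This forces $\deg w=(0,1)$ and $c\in K^{\times}$, so $L=\mathfrak b$ and $\mathfrak b$ is perfect.
\item Now your height count does finish. Modulo $y_{n-1}$, the ideal $\mathfrak b$ is $I_1(Y'X')$ for the odd-order block $X'$ on rows and columns $1,\ldots,n-2$, extended by the free variables $x_{k,n-1}$. The image $f_{n-1}\equiv\sum_{k\le n-2}y_kx_{k,n-1}$ lies in a minimal prime of $\mathfrak b$ only if that prime contains $\langle y_1,\ldots,y_{n-1}\rangle$, which has height $n-1$. So $f_{n-1}$ is regular on $B/\mathfrak b$.
\item All associated primes of $\mathfrak b$ contain $y_{n-1}\notin J_{(n)}$, so $\mathfrak a=J_{(n)}\cap\mathfrak b$.
\end{itemize}
From this decomposition, (ii), (iii) and both $d$-sequence conditions at $i=n-1$ follow. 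That includes $\mathfrak a:f_{n-1}f_n=\mathfrak b:f_n=\mathfrak a:f_n$, which you list but never verify. Alternatively, you can cite primality of $I_1(Y'X')$ for a generic alternating matrix of odd order.

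\textbf{Smaller points.} For $\supseteq$ in (iii), and for $y_{n-1}\in L$ above, you need $y_{n-1}f_n\in\mathfrak a$. Your fallback of multiplying by $y_{n-1}$ again only gives $y_{n-1}^2f_n\in\mathfrak a$, which does not put $f_n$ in $\mathfrak a:y_{n-1}$. The identity you already wrote suffices: $P'=\pf(X)X^{-1}$ is alternating, so its diagonal vanishes and $\sum_{j\neq k}f_jp'_{jk}=\pf(X)\,y_k$. That is, $y_kf_n\in\langle f_j : j\neq k\rangle$. With this, (iii) is immediate as in the paper: $\mathfrak a:y_{n-1}\subseteq J_{(n)}:y_{n-1}=J_{(n)}$ by primality of $J_{(n)}$.

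Similarly, (i) needs neither linkage nor the decomposition $J=J_{(n)}\cap\langle y\rangle$. Since $J\subseteq\langle y\rangle$, $\langle y\rangle$ is prime and $f_n\notin\langle y\rangle$, we get $J:f_n\subseteq\langle y\rangle$; this is the paper's one-line argument. You are right that the listed fact (b) cannot hold: $I_1(YX)=J_{(n)}\cap\langle y\rangle$ is not even unmixed. The paper's proof of (i), however, never uses it.
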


\begin{proof}
Let $J'_{(n)}=\langle f_1, \ldots, f_{n-1} \rangle$ and $J''_{(n)}=\langle f_1, \ldots, f_{n-2} \rangle$.
\begin{enumerate}[(i)]
    \item Consider the following expression for $\pf(X)$,
    $\pf(X)=\sum_{i=2}^nx_{1i}\alpha_i.$ Then, one can observe that for each $1 \leq k \leq n-1$, $y_{k}f_n=\sum_{\substack{i=1 \\ i \neq k}}\alpha_if_i.$ Thus, we obtain $\langle y_{1}, \ldots, y_{n-1} \rangle \subseteq (J'_{(n)}:f_{n}).$ The other inclusion follows from the fact that $J'_{(n)} \subseteq \langle y_{1}, \ldots, y_{n-1} \rangle$ and that $\langle y_{1}, \ldots, y_{n-1} \rangle$ is a prime ideal in $B$.
    \item From the form of the generators $f_i$ of $J'_{(n)}$, it is not difficult to observe that $y_{n-1}f_{n-1}=\sum_{i=1}^{n-2}y_if_i$. Thus, $J''_{(n)} \subseteq \langle y_{n-1}, f_1, \ldots, f_{n-2} \rangle \subseteq (J''_{(n)}:f_{n-1})$. Now, since $\overline{y_{n-1}}$ is a prime element in $B/J''_{(n)}$, it follows that $\langle y_{n-1}, f_1, \ldots, f_{n-2} \rangle$ is a prime ideal in $B$. Hence, we obtain the other inclusion.
    \item From the proof of (\ref{hu1.}), we have $y_{n-1}f_n \in J'_{(n)}$, and from $(\ref{hu2.})$,  $y_{n-1}f_{n-1} \in J'_{(n)}$. Thus, $J_{(n)} \subseteq (J''_{(n)}:y_{n-1})$. The equality then follows from the primality of $J_{(n)}$. 
    \item Since $\{ f_1, \ldots, f_{n-2} \}$ forms a $B$-regular sequence, we have $(0:f_1) \cap J'_{(n)}=0$, and for $i=1, \ldots, n-3$, $(\langle f_1, \ldots, f_{i}\rangle:f_{i+1}) \cap J'_{(n)}=\langle f_1, \ldots, f_{i}\rangle$. Finally, from part $(\ref{hu2.})$, we will have $(J''_{(n)}:f_{n-1}) \cap J'_{(n-1)}=J''_{(n)}$ and hence $\{f_1, \ldots, f_{n-1}\}$ forms a $d$-sequence.
    \item Using the fact that $\{ f_1, \ldots, f_{n-2} \}$ forms a $B$-regular sequence and part $(\ref{hu4.})$, we obtain $(0:f_1) \cap J_{(n)}=0$, and for $i=1, \ldots, n-2$, $(\langle f_1, \ldots, f_{i}\rangle:f_{i+1}) \cap J_{(n)}=\langle f_1, \ldots, f_{i}\rangle$. Finally, from part $(\ref{hu1.})$, we can conclude $(J'_{(n)}:f_{n-1}) \cap J_{(n)}=J'_{(n)}$, and hence $\{f_1, \ldots, f_{n}\}$ forms a $d$-sequence.
\end{enumerate}
\end{proof}

\begin{thm} \label{Reg_powers_HU}
Let $I=\langle f_1, \ldots, f_n \rangle$ be a Huneke-Ulrich ideal in the polynomial ring $B=K[X,Y]$. Then, for $s \geq 2$, $i=0,\ldots, n-1$,
$$sd_n -1 \leq \reg \left( \dfrac{B}{\langle f_1,\ldots,f_{i} \rangle+ I^s} \right)  \leq sd_n.$$
\end{thm}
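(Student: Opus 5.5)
The approach is to run the same double induction as in Theorem \ref{powers_aci} (induction on $s$, and for fixed $s$ descending induction on $i$). In the Huneke--Ulrich case $\reg(B/I)$ and $\reg(B/J)$ need not be separated, so Lemma \ref{reg_lemma} will not always give equalities. The plan is to carry two inequalities through each short exact sequence instead, using the explicit colon ideals of Proposition \ref{HU_prop}.

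\emph{Degrees and base regularities.} Here $f_1,\ldots,f_{n-1}$ are the entries of $YX$ and have degree $2$, while $f_n=\pf(X)$ has degree $d_n=(n-1)/2\ge 2$, so $d_n$ is maximal. By Proposition \ref{HU_prop}(\ref{hu1.}), $J:f_n^k=\langle y_1,\ldots,y_{n-1}\rangle$ for all $k\ge1$ (use Note \ref{properties}(\ref{p11}) and (\ref{hu5.})). Hence $B/(J:f_n)$ is a polynomial ring and has regularity $0$. Since $I$ is Gorenstein of grade $n-2$, $\reg(B/I)$ is read off from the last shift of its self-dual resolution. For $n=5$ the resolution is $1,5(-2),5(-3),1(-5)$, which gives $\reg=2=d_n$. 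I expect $\reg(B/I)=d_n$ in general, and would verify this either from the known resolution (Kustin) or from the sequence $0\to B/\langle \mathbf y\rangle(-d_n)\to B/J\to B/I\to0$. The same sequence then yields $\reg(B/J)\le d_n$.

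\emph{The case $i=n-1$.} Here $\langle f_1,\ldots,f_{n-1}\rangle+I^s=J+\langle f_n^s\rangle$. I use
\[0\to B/\langle \mathbf y\rangle(-sd_n)\xrightarrow{\cdot f_n^{s}} B/J\to B/(J+\langle f_n^s\rangle)\to0 .\]
The left term has regularity $sd_n$ and the middle term has regularity at most $d_n<sd_n$. Lemma \ref{reg_lemma}(\ref{r3}) then gives $\reg\bigl(B/(J+I^s)\bigr)=sd_n-1$ exactly, for every $s\ge2$.

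\emph{Descending in $i$.} Use the sequence (\ref{eq1}). By Note \ref{properties}(\ref{p12}), the kernel is $B/\bigl((\langle f_1,\ldots,f_i\rangle:f_{i+1})+I^{s-1}\bigr)(-d_{i+1})$ with $d_{i+1}=2$. For $i+1\le n-2$ the colon is just $\langle f_1,\ldots,f_i\rangle$ by Note \ref{properties}(\ref{p2}). Its regularity is therefore controlled by the induction on $s$: it lies in $[(s-1)d_n-1,(s-1)d_n]$, and after the shift by $2\le d_n$ it is at most $sd_n$. The case $s-1=1$ needs $\reg(B/\langle f_1,\ldots,f_i\rangle+I)=d_n$ separately.

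The delicate step is $i=n-2$, where by Proposition \ref{HU_prop}(\ref{hu2.}) the colon is $J''+\langle y_{n-1}\rangle$. I would compare $B/(J''+\langle y_{n-1}\rangle+I^{s-1})$ with $B/(J''+I^{s-1})$ via Lemma \ref{Genlem} and the sequence (\ref{ses_inequality_reg}). The quotient is $\cong (J'':f_{n-1})/J''\cong B/(J'':y_{n-1})(-1)=B/I(-1)$ by (\ref{hu3.}), and it has regularity $d_n+1$. Applying Lemma \ref{reg_lemma} then bounds the kernel's regularity above by $\max\{(s-1)d_n,\,d_n\}+2$ (roughly), which is at most $sd_n+1$.

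From each sequence, $\reg(N)\le\max\{\reg M,\reg P\}\le sd_n$ gives the upper bound. For the lower bound, use $\reg P\le\max\{\reg M-1,\reg N\}$: since $\reg M-1\le sd_n-1$, this forces $\reg N\ge sd_n-1$ whenever $\reg P=sd_n-1$ or $\reg P=sd_n$. When $\reg P=sd_n-1$ and $\reg M-1=sd_n-1$, the lower bound is in danger.

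The main obstacle I anticipate is exactly this coincidence of regularities. It occurs at $i=n-2$, and in the small case $n=5$, $s=2$, where $(s-1)d_n=2$ and $sd_n-1=d_n+1$. There the lemma is inconclusive. In that case I would first try a direct comparison, using the chain $J''\subset J''+I^{s}\subset J+I^s$ and the fact that $f_1,\ldots,f_{n-2}$ is regular, to show $\reg M-1<sd_n-1$ precisely. Failing that, I would treat $n=5$, $s=2$ by an explicit computation of the resolution.
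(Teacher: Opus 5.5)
\textbf{Overall.} Your route is the paper's: the same double induction through the sequences~(\ref{EQ}), the colon ideals of Proposition~\ref{HU_prop}, and Lemma~\ref{Genlem} at $i=n-2$. The upper bound goes through once one base value is corrected, but the lower bound has a genuine gap.

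\textbf{The base value.} $\reg(B/I)=2d_n-2$, not $d_n$; the paper uses $2d_n-2$. Put $J''=\langle f_1,\ldots,f_{n-2}\rangle$ and $L=J''+\langle y_{n-1}\rangle$. Part (iii) of Proposition~\ref{HU_prop} gives $J'':L=J'':y_{n-1}=I$, so $I$ and $L$ are linked by the complete intersection $J''$ of $n-2$ quadrics. Moreover $L/J''\cong B/I(-1)$ is generated in degree $1$. The linkage formula then gives $\reg(B/I)=2(n-2)-(n-2)-1=n-3=2d_n-2$; for $n=7$ this is $4$, not $3$. Consequently $\reg(B/J)=2d_n-2$ and $\reg(L/J'')=2d_n-1$. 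Your identification $L/J''\cong B/I(-1)$ is correct; the paper's claim that this quotient has regularity $0$ is not.

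The upper bound survives these corrections:
\begin{itemize}
\item Since $\reg(B/J)<sd_n$, you still get $\reg B/(J+I^s)=sd_n-1$.
\item At $s=2$, every kernel for $i\le n-2$ has regularity exactly $\reg(B/I)+2=2d_n$.
\item At $i=n-2$ you get $\reg B/(L+I^{s-1})\le\max\{2d_n-2,(s-1)d_n\}$. So the kernel has regularity at most $(s-1)d_n+2\le sd_n$, because $d_n\ge 2$. You need exactly this bound; the value $sd_n+1$ you wrote would not close the upper estimate.
\end{itemize}

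\textbf{The gap: the lower bound for $i\le n-2$.} The inequality $\reg P\le\max\{\reg M-1,\reg N\}$ says nothing when $\reg M=\reg P+1$. With the correct $\reg(B/I)$ this is not a small-case accident. Take $s=2$ and $i=n-2$: the kernel is $B/(I+\langle y_{n-1}\rangle)(-2)$. Since $y_{n-1}$ is a linear nonzerodivisor on the domain $B/I$, this kernel has regularity exactly $2d_n$, while $\reg B/(J+I^2)=2d_n-1$. This happens for every $n$. So your proposed repair, showing $\reg M-1<sd_n-1$, is impossible there, and a computation for $n=5$ cannot settle general $n$. The paper's induction also only produces upper bounds for $i\le n-2$.

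\textbf{The missing idea.} The lower bound needs no exact sequences. Every generator of $\langle f_1,\ldots,f_i\rangle+I^s$ other than $f_n^s$ lies in $J$: these are $f_1,\ldots,f_i$ and the products $f_1^{a_1}\cdots f_n^{a_n}\neq f_n^s$ with $\sum a_j=s$. On the other hand $f_n^s\notin J$, because $J:f_n^s=\langle y_1,\ldots,y_{n-1}\rangle\neq B$. Hence $f_n^s$ must appear in any minimal generating set extracted from these generators, since otherwise the ideal would lie in $J$. This gives $\reg(\langle f_1,\ldots,f_i\rangle+I^s)\ge sd_n$, that is, $\reg\bigl(B/(\langle f_1,\ldots,f_i\rangle+I^s)\bigr)\ge sd_n-1$ for all $i$ and $s$ at once.

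Equivalently, inside your framework: the map $\operatorname{Tor}^B_1(N,K)_{sd_n}\to\operatorname{Tor}^B_1(P,K)_{sd_n}$ is onto, since $\operatorname{Tor}^B_0(M,K)$ is concentrated in degree $2$. This carries the nonvanishing at $i=n-1$ down the descending induction.
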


\begin{proof}
Set $J=\langle f_1, \ldots, f_{n-1} \rangle$ and consider the short exact sequence,

$$0 \longrightarrow \dfrac{B}{J:f_{n}}(-d_n) \stackrel {.f_{n}}
    \longrightarrow \dfrac{B}{J} \longrightarrow \dfrac{B}{I} \longrightarrow 0.$$
Then, from Lemma \ref{reg_lemma} and Proposition \ref{HU_prop}(\ref{hu1.}), $\reg(B/(J:f_n))=0.$

\noindent \textbf{Claim 1:} For $i=0,\ldots, n-1$, $$\reg \left( \dfrac{B}{\langle f_1, \ldots,f_i\rangle+I^2} \right) \leq 2d_n.$$ We prove this by descending induction on $i$. For the base case $i=n-1$, we make use of the following short exact sequence
$$0 \longrightarrow \dfrac{B}{(J:f_{n}^2)}(-2d_n) \stackrel {.f_{n}^2}
    \longrightarrow \dfrac{B}{J} \longrightarrow \dfrac{B}{J+\langle f_{n}^2 \rangle} \longrightarrow 0.$$ As a consequence of Note \ref{properties}(\ref{p11}), we have that $\reg \left( \dfrac{B}{J:\langle f_{n}^2 \rangle} \right)=\reg \left( \dfrac{B}{(J: f_{n})} \right)=0.$ Thus we obtain the equality, $\reg \left( \dfrac{B}{J:\langle f_{n}^2 \rangle}(-2d_n) \right)=2d_n$. Then from Lemma \ref{reg_lemma}(\ref{r3}), $ \reg \left( \dfrac{B}{J+\langle f_{n}^2 \rangle} \right)=2d_n-1$. 
    Now, assume $i \leq n-2$. Consider the following short exact sequence,
$$0 \longrightarrow \dfrac{B}{(\langle f_1, \ldots,f_{i-1}\rangle+I^2):f_i}(-d_i) \stackrel {.f_{i}}
    \longrightarrow \dfrac{B}{\langle f_1, \ldots,f_{i-1}\rangle+I^2} \longrightarrow \dfrac{B}{\langle f_1, \ldots,f_{i}\rangle+I^2} \longrightarrow 0.$$ Then, $\reg \left(  \dfrac{B}{(\langle f_1, \ldots,f_{i-1}\rangle+I^2):f_i}(-d_i) \right)= \reg \left( \dfrac{B}{I}\right) +d_i=2d_n-2+d_i=2d_n$ and so from Lemma \ref{reg_lemma}(\ref{r1}), one obtains $\reg \left( \dfrac{B}{\langle f_1, \ldots,f_i\rangle+I^2} \right) \leq 2d_n$.\\

\noindent \textbf{Claim 2:} For $s \geq 2$, 
$$\reg \left( \dfrac{B}{(J:f_n)+\langle f_n^s \rangle} \right)=sd_n-1.$$ To see this, consider the following short exact sequence,
$$0 \longrightarrow \dfrac{B}{J:f_n^{s+1}}(-sd_n) \stackrel {.f_{n}^s}
    \longrightarrow \dfrac{B}{J:f_n} \longrightarrow \dfrac{B}{(J:f_n)+\langle f_n^s \rangle} \longrightarrow 0.$$ Then, $\reg \left(  \dfrac{B}{(J:f_n)+\langle f_n^s \rangle} \right)=\reg \left( \dfrac{B}{(J:f_n)} \right)+sd_n-1=sd_n-1.$ Now we move on to prove the main result with the help of the claims proved before.
The base case of $s=2$ is true by \textbf{Claim 1}. For $s \geq 3$, consider the following short exact sequence
\begin{equation} \label{EQ}
0 \longrightarrow \dfrac{B}{(\langle f_1, \ldots,f_{i}\rangle+I^s):f_{i+1}}(-d_{i+1}) \stackrel {.f_{i+1}}
    \longrightarrow \dfrac{B}{\langle f_1, \ldots,f_{i}\rangle+I^s} \longrightarrow \dfrac{B}{\langle f_1, \ldots,f_{i+1}\rangle+I^s} \longrightarrow 0.
\end{equation}
We prove the result by descending induction on $i$. The base step of the descending induction in Equation (\ref{eq1}) corresponds to the case $i=n-1$. In this case, clearly, $\langle f_1, \ldots,f_{(n-1)+1}\rangle+I^s=I$. Hence, $$\reg \left( \dfrac{B}{(\langle f_1, \ldots,f_{n-1}\rangle+I^s):f_{n}}(-d_{n}) \right)=\reg \left( \dfrac{B}{(J:f_n)+\langle f_n^{s-1} \rangle} \right)=(s-1)d_n-1+d_n=sd_n-1.$$ Clearly, $\reg \left( \dfrac{B}{\langle f_1, \ldots,f_{n}\rangle+I^s}\right)=sd_n-1$. Hence from Lemma \ref{reg_lemma}(\ref{r1}), $\reg \left( \dfrac{B}{J+I^s}\right)=sd_n-1$. Now, let $i = n-2$. We compare, $\reg \left( \dfrac{B}{(\langle f_1, \ldots,f_{n-1}\rangle:f_n)+I^{s-1}} \right)$ with $\reg \left( \dfrac{B}{\langle f_1, \ldots,f_{n-2}\rangle+I^{s-1}}\right)$. 

Consider the short exact sequence,
\begin{equation} \label{ses_inequality_reg}
0 \longrightarrow \langle f_1, \ldots,f_{n-2} \rangle+I^{s-1} \xhookrightarrow{}
     (\langle f_1, \ldots,f_{n-2} \rangle : f_{n-1})+I^{s-1} \twoheadrightarrow \dfrac{(\langle f_1, \ldots,f_{n-2} \rangle : f_{n-1})+I^{s-1}}{\langle f_1, \ldots,f_{n-2} \rangle+I^{s-1}} \longrightarrow 0.
\end{equation}

Now, since $\{f_1, \ldots, f_{n-1}\}$ is a $d$-sequence,  
$$(\langle f_1, \ldots,f_{n-2} \rangle : f_{n-1}) \cap I^{s-1} \subseteq (\langle f_1, \ldots,f_{n-2} \rangle : f_{n-1}) \cap I = \langle f_1, \ldots,f_{n-2} \rangle.$$

This implies using Lemma \ref{Genlem}, one obtains, 
 $$\dfrac{(\langle f_1, \ldots,f_{n-2} \rangle : f_{n-1})+I^{s-1}}{\langle f_1, \ldots,f_{n-2} \rangle+I^{s-1}} \cong 
 \dfrac{(\langle f_1, \ldots,f_{n-2} \rangle : f_{n-1})}{\langle f_1, \ldots,f_{n-2} \rangle} \cong \dfrac{\langle y_{n-1} \rangle}{\langle f_1, \ldots,f_{n-2} \rangle}.$$ 

Since $\reg \left( \dfrac{(\langle f_1, \ldots, f_{n-2} \rangle:f_{n-1})}{\langle f_1, \ldots,f_{n-2} \rangle} \right)=0$, this implies from Lemma \ref{reg_lemma}(\ref{r1}), $$\reg \left(\dfrac{B}{(\langle f_1, \ldots,f_{n-2}\rangle : f_{n-1})+I^{s-1}} \right)=\reg \left( \dfrac{B}{\langle f_1, \ldots,f_{n-2} \rangle+I^{s-1}} \right)\leq (s-1)d_n.$$ Thus in Equation (\ref{eq1}), one obtains $\reg \left( \dfrac{B}
{\langle f_1, \ldots,f_{n-2} \rangle+I^{s}} \right)\leq sd_n.$

Now, consider $i<n-2$. Then the result follows from the regular sequence property of $\{f_1, \ldots, f_{n-2} \}$.
\end{proof}

\section{On the Structure and Properties of Certain Classes of Deviation Two Ideals} \label{Section3}

The aim of this section is to develop the homological tools required for the results of the last section. Throughout, we assume that $I$ is an ideal of deviation two in a  polynomial ring $B$. We construct two distinct classes of deviation two ideals by adjoining, in an iterative manner, a $B/I$-regular sequence to $I$. Moreover, we show that the free resolutions of the second Koszul homology of these ideals can be described explicitly in terms of the free resolution of the second Koszul homology of $I$ (see Lemma \ref{Res_of_H2} and Proposition \ref{H1-regular}).

\vspace{2mm}

It is natural to ask how one can obtain classes of ideals of deviation two beginning with a given ideal $I$. A standard approach towards this problem is to iteratively extend the ideal $I$ by adjoining elements satisfying certain properties. The following series of lemmas provides conditions under which such extensions preserve the deviation.

\begin{lem}\label{reg-grade}
Let $I$ be an ideal in a polynomial ring $B$, and let $f \in B$ such that $f$ is $B/I$-regular. 
\begin{enumerate}[(i)]
\item Then, 
$\grade(I + \langle f \rangle) = \grade(I) + 1.
$
\item \label{reg-dev} The deviation of $I + \langle f \rangle$ is equal to the deviation of $I$.
\end{enumerate}
\end{lem}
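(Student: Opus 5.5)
The plan is to work with heights rather than with regular sequences directly: $B$ is a polynomial ring, hence Cohen--Macaulay, so $\grade(\mathfrak a)=\height(\mathfrak a)$ for every proper ideal $\mathfrak a$. We may assume $I+\langle f\rangle$ is proper, as it is in the graded setting: $f$ being $B/I$-regular forces $I\neq B$, and $f$ is homogeneous of positive degree. Part (i) then becomes $\height(I+\langle f\rangle)=\height(I)+1$, and part (ii) reduces to a count of minimal generators.

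For the lower bound in (i), let $Q$ be any prime containing $I+\langle f\rangle$. Then $Q$ contains some minimal prime $P$ of $I$. Since $f$ is $B/I$-regular, $f$ lies in no associated prime of $B/I$, and in particular $f\notin P$. Hence $P\subsetneq Q$, so $\height Q\geq \height P+1\geq \height I+1$. Taking the minimum over all such $Q$ gives $\height(I+\langle f\rangle)\geq \height(I)+1$.

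For the upper bound, choose a minimal prime $P$ of $I$ with $\height P=\height I$, and a minimal prime $Q$ of $P+\langle f\rangle$. By Krull's principal ideal theorem applied in $B/P$, we have $\height(Q/P)\leq 1$. Since $B$ is a catenary domain (being a polynomial ring), this gives $\height Q\leq \height P+1$. Because $Q\supseteq I+\langle f\rangle$, we get $\height(I+\langle f\rangle)\leq \height I+1$. This proves (i).

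For (ii), by (i) it suffices to show $\mu(I+\langle f\rangle)=\mu(I)+1$, using graded Nakayama with $\mathfrak m$ the homogeneous maximal ideal. Let $g_1,\dots,g_m$ be a minimal homogeneous generating set of $I$. Then $g_1,\dots,g_m,f$ generate $I+\langle f\rangle$, and I would show none of them is redundant.

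First suppose $f$ were redundant, say $f=\sum a_jg_j+af$ with $a\in\mathfrak m$. Comparing degrees, $f\in I$, which contradicts $f$ being regular on $B/I\neq 0$.

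Next suppose some $g_k$ were redundant, say $g_k=\sum_{j\neq k}a_jg_j+af$. Then $af\in I$, so $a\in I$ by regularity of $f$ on $B/I$. Hence $af\in \mathfrak m I$, since $\deg f>0$. This expresses $g_k$ modulo $\mathfrak m I$ in terms of the other $g_j$, contradicting minimality of $g_1,\dots,g_m$.

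The main obstacle I expect is bookkeeping rather than an idea: the upper bound in (i) needs the catenary/Krull step, and (ii) relies on homogeneity to pass from $(1-a)f\in I$ to $f\in I$. Both steps are standard.
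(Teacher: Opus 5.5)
Your proof is correct and follows the same route as the paper: for (i) you pass to heights using the Cohen--Macaulay property of $B$, get the lower bound from $f$ avoiding the minimal primes of $I$, and get the upper bound from Krull's principal ideal theorem at a minimal prime of minimal height; for (ii) you reduce to $\mu(I+\langle f\rangle)=\mu(I)+1$. The differences are only in detail: you make the graded hypothesis explicit, use catenarity to justify $\height Q\le \height P+1$, and verify the generator count by graded Nakayama, whereas the paper simply asserts it.
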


\begin{proof}
Since $B$ is Cohen--Macaulay and $I$ is an ideal of $B$, we have $\grade(I) = \height(I)$.
\begin{enumerate}[(i)]
\item By the definition of height, there exists a minimal prime $\mathfrak{p} \supseteq I$ such that $\height(\mathfrak{p}) = \height(I)$. Given that $f$ is $B/I$-regular, it follows that $f \notin \mathfrak{p}$. By Krull's Principal Ideal Theorem, there exists a prime ideal $\mathfrak{p}_1 \supseteq \mathfrak{p} + \langle f \rangle$ such that
$
\height(\mathfrak{p}_1) = \height(\mathfrak{p}) + 1 = \height(I) + 1.
$
Since $I + \langle f \rangle \subseteq \mathfrak{p}_1$, we get
$
\height(I + \langle f \rangle) \leq \height(\mathfrak{p}_1) = \height(I) + 1.
$
On the other hand, as $f$ is $B/I$-regular, it is not a zero divisor on $B/I$, so
$
\height(I + \langle f \rangle) > \height(I).
$
Hence,
$
\height(I + \langle f \rangle) \geq \height(I) + 1.
$
Combining the two inequalities, we conclude
$ \height(I + \langle f \rangle) = \height(I) + 1.$
Since $B$ is Cohen-Macaulay, we also have
$
\grade(I + \langle f \rangle) = \height(I + \langle f \rangle) = \height(I) + 1 = \grade(I) + 1. $
\item Since $f$ is $B/I$-regular, it follows that $\mu(I + \langle f \rangle) = \mu(I) + 1$. Since $\grade(I + \langle f \rangle) = \grade(I) + 1$, it follows that
\[
\mu(I + \langle f \rangle) - \grade(I + \langle f \rangle) = (\mu(I) + 1) - (\grade(I) + 1) = \mu(I) - \grade(I),
\]
which shows that the deviation remains unchanged.
\end{enumerate}
\end{proof}

\begin{lem}
Let $I$ be an unmixed ideal in a polynomial ring $B$, and let $f \in B$. Then the deviation of $I + \langle f \rangle$ is equal to the deviation of $I$ if and only if $f$ is $B/I$-regular.
\end{lem}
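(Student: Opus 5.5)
The implication ``$f$ is $B/I$-regular $\Rightarrow$ deviation unchanged'' is exactly Lemma~\ref{reg-grade}(\ref{reg-dev}), so nothing new is needed there; note that this direction does not use unmixedness. The content is the converse, and the plan is to prove it in contrapositive form: assume $f$ is a zero divisor on $B/I$ and show that $\grade(I+\langle f\rangle)=\grade(I)$. The deviation then changes as soon as the number of generators goes up by one.

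The key steps, in order, are as follows.
\begin{enumerate}[(1)]
\item If $f$ is a zero divisor on $B/I$, then $f\in\mathfrak p$ for some $\mathfrak p\in\ass(B/I)$.
\item Since $I$ is unmixed, every associated prime of $B/I$ is minimal and has height $\height(I)$. Hence $\height(\mathfrak p)=\height(I)$.
\item From $I\subseteq I+\langle f\rangle\subseteq\mathfrak p$ we get
\[
\height(I)\le\height(I+\langle f\rangle)\le\height(\mathfrak p)=\height(I),
\]
so the heights agree.
\item Because $B$ is Cohen--Macaulay, height equals grade, exactly as in the proof of Lemma~\ref{reg-grade}. Therefore $\grade(I+\langle f\rangle)=\grade(I)$.
\item If $\mu(I+\langle f\rangle)=\mu(I)+1$, then $\dev(I+\langle f\rangle)=\dev(I)+1\neq\dev(I)$, which is the required contradiction.
\end{enumerate}

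The main obstacle is step (5), namely guaranteeing that adjoining $f$ really increases the minimal number of generators. As literally stated, the lemma is false without such a hypothesis. Take $I=\langle x^2\rangle\subseteq K[x]$, which is unmixed, and $f=x$. Then $I+\langle f\rangle=\langle x\rangle$, and both ideals have deviation $0$. Yet $x$ is a zero divisor on $B/I$. The same failure occurs trivially whenever $f\in I$. I would therefore read the lemma in the setting of this section, where $f$ is ``adjoined'' to $I$ as a new minimal generator. Concretely, I would add the standing hypothesis that $I$ and $f$ are homogeneous and that $\mu(I+\langle f\rangle)=\mu(I)+1$, equivalently that $f\notin I+\mathfrak m(I+\langle f\rangle)$ for the homogeneous maximal ideal $\mathfrak m$. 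This is precisely the equality that Lemma~\ref{reg-grade}(\ref{reg-dev}) also uses. With this hypothesis, step (5) is immediate.

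To be complete, I would also check that the hypothesis is harmless in the forward direction. When $f$ is $B/I$-regular and homogeneous of positive degree, we have $f\notin\mathfrak p$ for every minimal prime $\mathfrak p$ of $I$, and $I\subseteq\mathfrak p$, so $f\notin I$. Moreover, $\mu(I+\langle f\rangle)=\mu(I)+1$ holds automatically, by a degree argument: any relation expressing $f$ through the generators of $I$ and $\mathfrak m f$ would force $(1-a)f\in I$ with $a\in\mathfrak m$. So the corrected statement is a genuine equivalence, and the direction stated in the paper becomes exact.
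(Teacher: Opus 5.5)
Your argument for the substantive direction is the same as the paper's. Unmixedness forces every associated prime of $B/I$ to have height $\height(I)$, so if $f$ lay in one of them, $\height(I+\langle f\rangle)$ would equal $\height(I)$. The paper argues this directly, while you argue it contrapositively.

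Your caveat is correct, and it identifies exactly the gap in the paper's proof. The paper asserts that equal deviation yields both $\mu(I+\langle f\rangle)=\mu(I)+1$ and $\grade(I+\langle f\rangle)=\grade(I)+1$. But equal deviation only says that the two increments coincide. In your example $I=\langle x^2\rangle\subseteq K[x]$, $f=x$, both increments are $0$, so the lemma as stated is false.

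One slip: the parenthetical ``equivalently $f\notin I+\mathfrak m(I+\langle f\rangle)$'' is not equivalent to $\mu(I+\langle f\rangle)=\mu(I)+1$. In that same example, $I+\mathfrak m(I+\langle f\rangle)=\langle x^2\rangle\not\ni x$, yet $\mu$ does not increase. The missing extra condition is $I\cap\mathfrak m(I+\langle f\rangle)=\mathfrak m I$. So state the hypothesis as the equality of the $\mu$'s itself.

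For the same reason, your $(1-a)f\in I$ remark in the forward check only shows that $f$ is not redundant. To see that no minimal generator of $I$ becomes redundant, use regularity. If $u=m+af\in I$ with $m\in\mathfrak mI$ and $a\in\mathfrak m$, then $af\in I$ forces $a\in I$. Hence $af\in\mathfrak m I$ and $u\in\mathfrak mI$.
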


\begin{proof}
The forward direction follows from Lemma \ref{reg-grade} (\ref{reg-dev}).

For the converse, suppose the deviation of $I + \langle f \rangle$ is equal to the deviation of $I$. Then
\[
\mu(I + \langle f \rangle) = \mu(I) + 1 \quad \text{and} \quad \grade(I + \langle f \rangle) = \grade(I) + 1.
\]
Thus, $\height(I + \langle f \rangle) = \height(I) + 1$. Since $I$ is unmixed, we have $\ass(I) = \Min(I)$ and all associated primes of $I$ have the same height. Therefore, $f$ does not belong to any associated prime of $B/I$, and hence $f$ is $B/I$-regular.
\end{proof}

\begin{lem} \label{Reg_seq}
Let $I = \langle f_1, f_2, \ldots, f_n, g_1, g_2 \rangle$ be an ideal in a polynomial ring $B$, where $\{f_1, \ldots, f_n\}$ is a regular sequence in $B$, and $\grade(I) = n$. Let $I'=\langle f_1, \ldots, f_n \rangle$, $g \in B$ be such that the deviation of $I + \langle g \rangle$ is equal to the deviation of $I$. Then $g$ is $B/I'$-regular.
\end{lem}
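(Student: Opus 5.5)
The plan is to reduce the statement to a height count against the associated primes of $I'=\langle f_1,\ldots,f_n\rangle$, in the spirit of the proofs of Lemma \ref{reg-grade} and the lemma that follows it.

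\textbf{Step 1 (normalisation and heights).} As in the converse part of the preceding lemma, I read the hypothesis as saying that $g$ adds a new minimal generator. Then equality of deviations means
\[
\mu(I+\langle g\rangle)=\mu(I)+1=n+3 \quad\text{and}\quad \grade(I+\langle g\rangle)=n+1 .
\]
Since $B$ is Cohen--Macaulay, grade equals height throughout, so $\height(I+\langle g\rangle)=n+1$.

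\textbf{Step 2 (structure of $I'$).} The ideal $I'$ is generated by a regular sequence, so $B/I'$ is Cohen--Macaulay. By Macaulay's unmixedness theorem, $\ass(B/I')=\Min(I')$ and every $\mathfrak p\in\ass(B/I')$ has height exactly $n$. The claim that $g$ is $B/I'$-regular is therefore equivalent to saying that $g$ lies in no height-$n$ associated prime of $I'$.

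\textbf{Step 3 (grade on the quotient).} By the depth-sensitivity of grade, $\grade(I+\langle g\rangle)=n+\grade\bigl((I+\langle g\rangle)B/I'\bigr)$. So $(I+\langle g\rangle)B/I'$ has grade one and contains a $B/I'$-regular element $h=a+bg$ with $a\in I$ and $b\in B$. On the other hand, $\grade(I)=n$ forces $IB/I'$ to consist of zero divisors, so by prime avoidance $I\subseteq\mathfrak p_0$ for some $\mathfrak p_0\in\ass(B/I')$.
\begin{itemize}
\item If $g\in\mathfrak p_0$, then $I+\langle g\rangle\subseteq\mathfrak p_0$, which has height $n$. This contradicts $\height(I+\langle g\rangle)=n+1$, so $g\notin\mathfrak p_0$.
\item The same argument excludes $g$ from every associated prime of $I'$ that contains $I$.
\end{itemize}

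\textbf{Main obstacle.} The hard step is an associated prime $\mathfrak q$ of $I'$ that does \emph{not} contain $I$. For such $\mathfrak q$ the height of $I+\langle g\rangle$ gives no direct information about whether $g\in\mathfrak q$. My first attempt would be to use the deviation-two structure of $I$ (it has exactly two generators $g_1,g_2$ beyond $I'$) together with the linkage $I'\!:\!I$ to show that every minimal prime of $I'$ contains $I$, i.e. $\Min(I')=\Min(I)$ on the height-$n$ part. If that succeeds, Step 3 finishes the proof.

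If it fails, I would test small examples, e.g. $I'=\langle xy\rangle$ in $K[x,y,z,w]$ with $g_1,g_2$ chosen inside $\langle x\rangle$ and $g=y$. Such an example would show whether an extra hypothesis is needed for the statement to hold, such as $I'$ having $I$-primary associated primes, or $\sqrt{I'}=\sqrt{I}$ on minimal primes.
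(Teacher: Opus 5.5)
Your Steps 1--3 are sound. Under the normalisation $\mu(I+\langle g\rangle)=n+3$, $\grade(I+\langle g\rangle)=n+1$, they correctly show that $g$ avoids every associated prime of $B/I'$ that contains $I$. But the ``main obstacle'' you isolate is not a technical hurdle. It is exactly where the statement fails, so no argument can close it.

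Your test case is one step away from a counterexample. In $B=K[x,y,z,w]$ take $n=1$, $f_1=xy$, $g_1=xz$, $g_2=xw$. Then $I=\langle xy,xz,xw\rangle$ has $\mu(I)=3$ and $\grade(I)=1$.
\begin{itemize}
\item The choice $g=y$ does not satisfy the hypothesis. In $I+\langle y\rangle=\langle y,xz,xw\rangle$ the generator $xy$ becomes redundant, so this ideal has deviation $1$.
\item Take instead $g=yz$. Then $I+\langle g\rangle=\langle xy,xz,xw,yz\rangle$ is minimally generated by these four monomials. Its minimal primes are $\langle x,y\rangle$, $\langle x,z\rangle$, $\langle y,z,w\rangle$, so it has height $2$ and deviation $2$. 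Both $\mu$ and grade rise by exactly one, as in your Step 1.
\item Yet $x\cdot yz=z\cdot xy\in I'$ and $x\notin I'$, so $g$ is a zero-divisor on $B/I'$. It lies in the associated prime $\langle y\rangle$ of $I'$, which does not contain $I$.
\end{itemize}
The same example refutes your proposed first attempt, namely that every minimal prime of $I'$ contains $I$. It also shows that the deviation-two shape of $I$ gives you no leverage.

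The paper's proof has precisely this gap. From ``$g$ is not $B/I'$-regular'' it concludes that a single associated prime $\mathfrak p$ of $B/I'$ contains all of $(I+\langle g\rangle)/I'$. But being a zero-divisor only puts $g$ in some associated prime, and $\grade(I/I')=0$ only puts $I$ in some possibly different one; this is your $\mathfrak q$ versus $\mathfrak p_0$. Read literally, the hypothesis even allows $g=g_1\in I$, which is always a zero-divisor on $B/I'$, so your Step 1 normalisation is needed as well.

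The natural repair is the kind of extra hypothesis you suggested. If $I\subseteq\sqrt{I'}$, i.e.\ every associated prime of $B/I'$ contains $I$, then your Steps 1--3 already give a complete proof. Without such a hypothesis, the $B/I_0'$-regularity of $g_1$ invoked in Proposition~\ref{H1-regular} must be assumed or verified directly rather than deduced from this lemma.
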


\begin{proof}
Since $\{f_1, \ldots, f_n\}$ is a regular sequence in $B$, the quotient $B/I'$ is Cohen--Macaulay. By assumption, $\grade(I) = n = \grade(I')$, so $\grade(I/I') = 0$ in $B/I'$. 

Given that the deviation remains unchanged upon adding $g$, we have
\[
\grade((I + \langle g \rangle)/I') = \grade(I + \langle g \rangle) - \grade(I') = (n + 1) - n = 1.
\]
Suppose, $g$ is not $B/I'$-regular. Then there exists an associated prime $\mathfrak{p}/I'$ of $B/I'$ such that $(I + \langle g \rangle)/I' \subseteq \mathfrak{p}/I'$. This implies $\grade((I + \langle g \rangle)/I') = 0$, contradicting the previous conclusion that it equals $1$. Therefore, $g$ is $B/I'$-regular.
\end{proof}

Now, let $I$ be an ideal of deviation two in a polynomial ring $B$ and $(\mathfrak{L}_\bullet, \tau_\bullet)$ denote a graded free resolution of the second Koszul homology $H_2(I,B)$. Then, we observe the following.

\begin{lem} \label{Res_of_H2}
Let $f$ be an element in $B$ such that $f$ is $B/I$-regular and $f$ is $H_1(I,B)$-regular. Then for $J=I+\langle f \rangle$, the following complex,  

$$ \cdots \longrightarrow \mathfrak{L}'_2 \stackrel{\tau'_2} \longrightarrow \mathfrak{L}'_1 \stackrel{\tau'_1} \longrightarrow \mathfrak{L}'_0 \longrightarrow 0 $$
is a graded free resolution of $H_2(J,B)$,
where for $i \geq 1$,

$\mathfrak{L}'_i= \mathfrak{L}_{i} \oplus \mathfrak{L}_{i-1} $ with \[
\tau_{i+1}' = \begin{pmatrix}
- \tau_{i+1} & f \cdot \mathrm{id}_{\mathfrak{L}_{i}} \\
0 & \tau_{i}
\end{pmatrix},
\]

and $\mathfrak{L}'_0=\mathfrak{L}_0$,
with
\[
\tau_1' = \begin{pmatrix}
-{\tau_1} &
{f \cdot \mathrm{id}_{\mathfrak{L}_0}}\\
\end{pmatrix}.
\]
\end{lem}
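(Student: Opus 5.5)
The complex in the statement is the mapping cone of multiplication by $f$ on $\mathfrak{L}_\bullet$, up to sign conventions. So the plan is to show that $H_2(J,B)\cong H_2(I,B)/fH_2(I,B)$ and that $f$ is a nonzerodivisor on $H_2(I,B)$. Then $\mathfrak{L}_\bullet\otimes_B K_\bullet(f;B)$, which is the cone of $f\colon\mathfrak{L}_\bullet(-\deg f)\to\mathfrak{L}_\bullet$, resolves $H_2(I,B)/fH_2(I,B)$. Its terms are $\mathfrak{L}_i\oplus\mathfrak{L}_{i-1}(-\deg f)$ and its differentials are exactly the matrices $\tau'_{i+1}$, $\tau'_1$. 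The grading shift on the second summand is implicit in the statement, and I would make it explicit. One direct check: $\tau'_i\tau'_{i+1}=0$ reduces to $\tau_i\tau_{i+1}=0$ and $-f\tau_i+f\tau_i=0$.

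\textbf{Step 1 (Koszul homology of $J$).} Let $\mathbf{g}$ be a minimal generating sequence of $I$. Then $K_\bullet(\mathbf{g},f;B)\cong K_\bullet(\mathbf{g};B)\otimes K_\bullet(f;B)$, which gives the standard long exact sequence
\[
\cdots\to H_i(I,B)(-\deg f)\xrightarrow{\pm f}H_i(I,B)\to H_i(J,B)\to H_{i-1}(I,B)(-\deg f)\xrightarrow{\pm f}H_{i-1}(I,B)\to\cdots.
\]
Taking $i=2$, the connecting map lands in $\ker(f\mid H_1(I,B))$, which is $0$ since $f$ is $H_1(I,B)$-regular. Hence $H_2(J,B)\cong H_2(I,B)/fH_2(I,B)$. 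Note that $f$ is indeed a minimal generator of $J$ because it is $B/I$-regular, and $\dev(J)=2$ by Lemma~\ref{reg-grade}.

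\textbf{Step 2 (regularity of $f$ on $H_2(I,B)$).} This is the step I expect to be the main obstacle, because the hypotheses only mention $B/I=H_0$ and $H_1$. I would use depth sensitivity. Since $\dev(I)=2$, we have $H_i(I,B)=0$ for $i>\mu(I)-\grade I=2$. Applying the long exact sequence at $i=3$ gives $0=H_3(I,B)\to H_3(J,B)\to H_2(I,B)(-\deg f)\xrightarrow{f}H_2(I,B)$. By Lemma~\ref{reg-grade}, $\grade J=\grade I+1$ and $\mu(J)=\mu(I)+1$, so $H_i(J,B)=0$ for $i>\mu(J)-\grade J=2$. In particular $H_3(J,B)=0$, and therefore $f$ is $H_2(I,B)$-regular. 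This argument does not even need the $H_1$ hypothesis; that hypothesis is used only in Step 1.

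\textbf{Step 3 (acyclicity).} Tensor the resolution $\mathfrak{L}_\bullet\to H_2(I,B)$ with $K_\bullet(f;B)$. The mapping cone long exact sequence gives $H_i(\mathrm{cone})=0$ for $i\ge 2$. Also $H_1(\mathrm{cone})=\ker(f\mid H_2(I,B))=0$ by Step 2, and $H_0(\mathrm{cone})=H_2(I,B)/fH_2(I,B)\cong H_2(J,B)$ by Step 1. A small index point needs care: as stated, $\mathfrak{L}'_0=\mathfrak{L}_0$ and $\mathfrak{L}'_1=\mathfrak{L}_1\oplus\mathfrak{L}_0$, which matches the cone. All maps are homogeneous once the shift $(-\deg f)$ is inserted, so the result is a graded free resolution.
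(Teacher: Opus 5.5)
Your proposal is correct and follows the paper's proof. Both use the long exact Koszul homology sequence for $J=I+\langle f\rangle$: vanishing of $H_3(J,B)$ (deviation of $J$ is two) makes multiplication by $f$ injective on $H_2(I,B)$, and $H_1(I,B)$-regularity of $f$ makes $H_2(I,B)\to H_2(J,B)$ surjective, so $H_2(J,B)\cong H_2(I,B)/fH_2(I,B)$ is resolved by the mapping cone of $f$ on $\mathfrak{L}_\bullet$. You simply make explicit two points the paper leaves implicit: the justification for the injectivity of $f$ on $H_2(I,B)$, and the twist $(-\deg f)$ on the summands $\mathfrak{L}_{i-1}$.
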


\begin{proof}
Consider the natural short exact sequence of Koszul complexes,

\begin{align}
	0 \longrightarrow \mathcal{K}_\bullet(I, B) \xrightarrow{i} \mathcal{K}_\bullet(J, B) \xrightarrow{\pi} \mathcal{K}_\bullet(I, B)[-1] \longrightarrow 0.
\end{align}
Since deviation of $I$ is same as the deviation of $J$ which is equal to two, one has $H_i(I,B)=0=H_i(J,B)$ for all $i \geq 3$. Thus from this short exact sequence, we obtain a long exact sequence in Koszul homology,

\begin{align*}
	0 \longrightarrow H_2(I, B) \xrightarrow{\color{violet}\cdot {f}} H_2(I, B) \longrightarrow H_2(J, B) \longrightarrow H_1(I, B) \xrightarrow{\color{red}\cdot {f}} H_1(I, B) \longrightarrow  \\
    H_1(J, B) \longrightarrow H_0(I, B) \xrightarrow{\color{blue}\cdot {f}} H_0(I, B) \longrightarrow H_0(J, B) \longrightarrow 0.
\end{align*}

Since, $f$ is $H_1(I,B)$-regular, one obtains the following short exact sequence from the above long exact sequence of Koszul homology modules,
\begin{align*}
0 \longrightarrow H_2(I, B) \xrightarrow{\color{violet}\cdot {f}} H_2(I, B) \longrightarrow H_2(J, B) \longrightarrow 0
\end{align*}

This implies that
$$
H_2(J, B) \cong H_2(I, B)/ f  H_2(I, B),
$$
and the mapping cone of the multiplication map by $ \color{violet}f $ on $H_2(I,B)$, extended to a map of complexes, yields the required resolution of $ H_2(J, B) $ in terms of a resolution of $ H_2(I, B) $.

\end{proof}

This naturally leads to the following question regarding the behaviour of elements with respect to Koszul homology.

\begin{quest}
Given an ideal $I$ of deviation two in a polynomial ring $B$, and an element $f \in B$ that is $B/I$-regular, under what conditions is $f$ also regular on $H_1(I,B)$?  
\end{quest}

In the subsequent part of this section, we define a class of ideals that satisfy the above property, allowing for a better understanding of their second Koszul homology.

\vspace{2mm}

Let $ I_0 =\langle f_1, f_2, \ldots, f_{n-2}, f_{n-1},f_{n}\rangle$ be an ideal of deviation two in a polynomial ring $B$, where the sequence $\{f_1, f_2, \ldots, f_{n-2}\}$ forms a regular sequence in $B$. Set $I_0'=\langle f_1, f_2, \ldots, f_{n-2} \rangle$ and $I_0''=\langle f_1, f_2, \ldots, f_{n-2}, f_{n-1} 
\rangle$. Assume that $(I_0': f_{n-1}) \subseteq (I_0': f_{n})$. We define a class of ideals $\{I_k\}_{k \geq 1}$ by extending $I_0$ iteratively as follows.

\vspace{2mm}

\begin{description} \label{Classofideals}
    \item[\textbf{Class I}] \label{Class1}
 Let $I_k=I_{0}+ \langle g_1, \ldots, g_{k} \rangle$ for $k \geq 1$ where the elements $g_1, \ldots, g_k \in B$ satisfy the following properties.
\begin{enumerate}[(i)]
    \item The sequence $\{ g_1, \ldots, g_{k}\}$ is $B/I_{0}$-regular.
    \item The sequence $\{g_1, \ldots, g_k\}$ is $B/I_0''$-regular.
    \item $(I_0'+\langle g_1 ,\ldots, g_{k}\rangle: f_{n-1}) \subseteq (I_0'+\langle g_1 ,\ldots, g_{k}\rangle: f_{n}) $ for $k \geq 1$.
\end{enumerate}

\vspace{2mm}

\item [\textbf{Class II}]\label{Class2}
Let $I_k=I_{0}+ \langle g_1, \ldots, g_{k} \rangle$ for $k \geq 1$ where the elements $g_1, \ldots, g_k \in B$ satisfy the following properties. 
\begin{enumerate} [(i)]
    \item The sequence $\{g_1, \ldots, g_k\}$ is $B/I_0$-regular.
    \item The sequence $\{g_1, \ldots, g_k\}$ is $B/I_0''$-regular.
    \item The sequence \label{4} $\{g_1, \ldots, g_k\}$ is $M$-regular where $M=(I_0'':f_{n})$.
\end{enumerate}
\end{description}

\begin{prop} \label{H1-regular}
Let $\{I_k\}$ denote a class of ideals in a polynomial ring $B$ of the form in Class I or Class II. Then $\{g_1, \ldots, g_k\}$ is $H_1(I_{0},B)$ regular.
\end{prop}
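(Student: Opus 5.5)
The plan is to reduce everything to Koszul homology of two elements over $R=B/I_0'$. Since $\{f_1,\ldots,f_{n-2}\}$ is a $B$-regular sequence, the standard change-of-rings for Koszul homology gives
$$H_1(I_0,B)\cong H_1(\bar f_{n-1},\bar f_n;R),$$
with $a=\bar f_{n-1}$ and $b=\bar f_n$. So the task is to show that $\{g_1,\ldots,g_k\}$ is regular on the first Koszul homology of the pair $a,b$ over $R$.

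I will describe this module by a short exact sequence. A $1$-cycle is a pair $(x,y)\in R^2$ with $xa+yb=0$, and boundaries are $r(-b,a)$. Projection onto the second coordinate sends $Z_1$ onto $(aR:_Rb)=(I_0'':f_n)/I_0'$, with kernel $(0:_Ra)\oplus 0$. Passing to homology, this gives
$$0\to \frac{(0:_Ra)}{b\,(0:_Ra)}\to H_1(I_0,B)\to \frac{(I_0'':f_n)}{I_0''}\to 0.$$
Here $(0:_Ra)=(I_0':f_{n-1})/I_0'$. The standing hypothesis $(I_0':f_{n-1})\subseteq(I_0':f_n)$ means $b$ kills $(0:_Ra)$, so the left term is just $(I_0':f_{n-1})/I_0'$.

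Next I use the standard fact that a sequence regular on both outer terms of a short exact sequence is regular on the middle term. The proof is by induction on $k$: regularity of $g_1$ on the right-hand term gives $\operatorname{Tor}_1(B/g_1,-)=0$ there, so the sequence stays exact after reducing mod $g_1$. It therefore suffices to prove that $\{g_1,\ldots,g_k\}$ is regular on
$$U=(I_0':f_{n-1})/I_0' \quad\text{and on}\quad V=(I_0'':f_n)/I_0''.$$
At each inductive step I must also check that the reduced modules keep the same shape. For $U$ reduced mod $(g_1,\ldots,g_j)$, I want it to equal $(I_0'+\langle g_1,\ldots,g_j\rangle : f_{n-1})/(I_0'+\langle g_1,\ldots,g_j\rangle)$. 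For $V$ the analogue is the corresponding colon for $I_0''+\langle g\rangle$.

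For $U$, I use the exact sequence
$$0\to U\to R\xrightarrow{\cdot f_{n-1}} (I_0''/I_0')(d_{n-1})\to 0,$$
together with $0\to I_0''/I_0'\to R\to B/I_0''\to 0$. The $g$'s are regular on $B/I_0''$ by condition (ii) of both classes. They are regular on $R=B/I_0'$ as well: $f_1,\ldots,f_{n-2},g_1,\ldots,g_k$ is a regular sequence in a polynomial ring, by a grade count using Lemma \ref{Reg_seq} iteratively. A depth-lemma or Tor argument then makes $\{g\}$ regular on $I_0''/I_0'$. Applying the same argument once more, using regularity on $R$ together with the Tor-vanishing, gives regularity on the kernel $U$.

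For $V\subseteq B/I_0''$: in Class II, condition (iii) gives regularity on $M=(I_0'':f_n)$. Combined with regularity on $I_0''$ (as above) and the sequence $0\to I_0''\to M\to V\to 0$, this yields regularity on $V$, again via Tor-vanishing. In Class I, I would instead use condition (iii), $(I_0'+\langle g\rangle:f_{n-1})\subseteq(I_0'+\langle g\rangle:f_n)$, at each stage. Its role is to guarantee that the displayed short exact sequence persists for $I_0+\langle g_1,\ldots,g_j\rangle$ with the simplified left term. That lets the induction run with $V$ controlled as a submodule of $B/(I_0''+\langle g_1,\ldots,g_j\rangle)$, on which $g_{j+1}$ is a nonzerodivisor.

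The main obstacle is the passage from single elements to sequences. Regularity on a submodule of a module is automatic for one element but not for a sequence. So the delicate point is verifying that reducing modulo $g_1,\ldots,g_j$ commutes with forming the colon modules $U$ and $V$, and this is exactly where conditions (iii) of Class I and Class II enter. I would first try to establish this commutation through the Tor-vanishing coming from regularity on $B/I_0''$ and $B/I_0'$, and fall back on the explicit colon hypotheses where the Tor argument is insufficient.
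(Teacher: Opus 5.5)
\textbf{Verdict: there is a genuine gap, at the step for $V$. It is easily repaired with your own argument for $U$.}

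Most of your set-up is correct:
\begin{itemize}
\item the change of rings to $R=B/I_0'$;
\item the exact sequence $0\to U\to H_1(I_0,B)\to V\to 0$, which is the paper's sequence $0\to H_1(I_0'',B)\to H_1(I_0,B)\to(0:_{B/I_0''}f_n)\to 0$ (the paper gets it from two Koszul long exact sequences);
\item the lemma that a sequence regular on both outer terms is regular on the middle one;
\item your treatment of $U$ through two kernel sequences.
\end{itemize}

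\textbf{The gap.} In Class II you deduce that $\mathbf g=g_1,\ldots,g_k$ is regular on $V=M/I_0''$ from its regularity on $I_0''$ and on $M$. But regularity on the middle and right terms passes to the left term; regularity on the left and middle terms does not pass to the right term. For example, in $K[x,y]$ the sequence $y,x$ is regular on $x^2K[x,y]$ and on $K[x,y]$, and $y$ is even regular on the quotient. Yet $y,x$ is not regular on $K[x,y]/(x^2)$. In your setting $g_1$ is regular on $V$ for free, since $V\subseteq B/I_0''$, but nothing in your argument controls $g_2$ on $V/g_1V$.

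The Class I paragraph has the same hole. To use that $g_{j+1}$ is a nonzerodivisor on $B/I_j''$, you need $V/(g_1,\ldots,g_j)V\cong(I_j'':f_n)/I_j''$, or in the paper's formulation $H_1(I_j,B)\cong H_1(I_0,B)/(g_1,\ldots,g_j)H_1(I_0,B)$. Your last paragraph leaves this open. Condition (iii) of Class I supplies neither identification; it only simplifies the left-hand term at level $j$.

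\textbf{The repair.} Apply your $U$-argument to $V$; this works uniformly for both classes. Since $V=(0:_{B/I_0''}f_n)$, there are exact sequences
\[
0\to V(-d_n)\to (B/I_0'')(-d_n)\xrightarrow{\ \cdot f_n\ } I_0/I_0''\to 0,\qquad 0\to I_0/I_0''\to B/I_0''\to B/I_0\to 0 .
\]
Here $\mathbf g$ is regular on $B/I_0$ by condition (i) and on $B/I_0''$ by condition (ii). Your kernel lemma then gives regularity first on $I_0/I_0''$ and then on $V$. So the Tor-vanishing relevant for $V$ comes from $B/I_0$ and $B/I_0''$, not from $B/I_0'$. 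It also yields the identification above, which you no longer need.

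\textbf{Comparison with the paper.} With this fix your proof is complete, modulo the $B/I_0'$-regularity of $\mathbf g$. It is tighter than the paper's proof, which verifies only that $g_1$ is a nonzerodivisor (via the snake lemma) and then sketches the induction: it passes to $I_j$ via (iii) in Class I, and reduces the sequence modulo $g_1$ in Class II. Your route shows that conditions (iii) of both classes are not needed for this proposition. Only (i), (ii) and $(I_0':f_{n-1})\subseteq(I_0':f_n)$ are used. So your closing claim, that (iii) is exactly where extra input enters, is mistaken.

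\textbf{A caution on an input you share with the paper.} Lemma~\ref{Reg_seq} fails as stated. Take $I=\langle xy,xz,xw\rangle\subseteq K[x,y,z,w]$, $I'=\langle xy\rangle$ and $g=y^2$. The deviation stays $2$, yet $g$ is a zero divisor on $B/I'$. The lemma's proof only places $g$, not $I+\langle g\rangle$, in an associated prime of $B/I'$. So the $B/I_0'$-regularity of $\mathbf g$ needs its own justification in the class setting, both in your proof and in the paper's.
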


\begin{proof}
 We have $I_0$ to be an ideal of deviation $2$ in $B$, and $I_0'=\langle f_1, f_2, \ldots, f_{n-2} \rangle$ and $I_0''=\langle f_1, f_2, \ldots, f_{n-2}, f_{n-1} \rangle$ to be subideals of $I_0$ of deviations $0$ and $1$, respectively. Consider the short exact sequence of Koszul complexes,
\begin{align}\label{Kos-dev-1}
	0 \longrightarrow \mathcal{K}_\bullet(I_0', B) \xrightarrow{i} \mathcal{K}_\bullet(I_0'', B) \xrightarrow{\pi} \mathcal{K}_\bullet(I_0', B)[-1] \longrightarrow 0.
\end{align}
From this short exact sequence, and using the fact that $I_0'$ and $I_0''$ are ideals of deviation $0$ and $1$ respectively, we obtain a long exact sequence in Koszul homology,
\begin{align}\label{long-kos-dev-1}
	0 \longrightarrow H_1(I_0'', B) \longrightarrow H_0(I_0', B) \xrightarrow{\color{blue}\cdot{f_{n-1}}} H_0(I_0', B)\longrightarrow H_0(I_0'', B) \longrightarrow 0.
\end{align}
Here, we use the fact that $H_1(I_0', B)[-1]) = H_0(I_0', B) \cong B/I_0'$. From the exactness of \eqref{long-kos-dev-1}, it follows that,
\begin{equation} \label{LES1}
H_1(I_0'', B) = \ker(\textcolor{blue}{\cdot{f_{n-1}}}) = (0 :_{B/I_0'} f_{n-1}).
\end{equation}

Now, consider the second short exact sequence of Koszul complexes,
\begin{align}\label{Kos-dev-1'}
	0 \longrightarrow \mathcal{K}_\bullet(I_0'', B) \xrightarrow{i} \mathcal{K}_\bullet(I_0, B) \xrightarrow{\pi} \mathcal{K}_\bullet(I_0'', B)[-1] \longrightarrow 0.
\end{align}
This induces the following long exact sequence in Koszul homology,
\begin{align*}
	0 \longrightarrow  H_2(I_0, B) \longrightarrow H_1(I_0'', B) \xrightarrow{\color{red}\cdot{f_n}} H_1(I_0'', B)  \xrightarrow{\psi} H_1(I_0, B) \xrightarrow{\epsilon}  \\ 
    H_0(I_0'', B) \xrightarrow{\color{blue}\cdot{f_n}} H_0(I_0'', B) \longrightarrow H_0(I_0, B) \longrightarrow 0.
\end{align*}

From this sequence and Equation $(\ref{LES1})$, we obtain,

$$
H_2(I_0, B) \cong \ker(\red{\cdot{f_n}}) = (0 :_{B/I_0'} f_{n-1}) \cap (0 :_{B/I_0'} f_n).
$$
Since $(I_0' : f_{n-1}) \subseteq (I_0' : f_n)$, it follows that
$$
H_2(I_0, B) = (0 :_{B/I_0'} f_{n-1}), \quad \text{and} \quad  \ker(\psi) = \operatorname{im}(\red{\cdot{f_n}}) = 0.
$$
Moreover, $\operatorname{im}(\epsilon) = \ker(\textcolor{blue}{\cdot{f_n}}) = (0 :_{B/I_0''} f_n)$, so we obtain a short exact sequence,
\begin{align} \label{ses1}
	0 \longrightarrow H_1(I_0'', B) \xrightarrow{\psi} H_1(I_0, B) \xrightarrow{\epsilon} (0 :_{B/I_0''} f_n) \longrightarrow 0.
\end{align}

Now consider the following commutative diagram of short exact sequences,
\begin{equation}\label{CD1}
\begin{tikzcd}
0 \arrow[r] & H_1(I_0'', B) \arrow[r, "\psi"] \arrow[d, "{\textcolor{violet}{\cdot g_1}}"] & H_1(I_0, B) \arrow[r, "\epsilon"] \arrow[d, "\textcolor{red}{{\cdot g_1}}"] & (0 :_{B/I_0''} f_n) \arrow[r] \arrow[d, "\textcolor{blue}{\cdot g_1}"] & 0 \\
0 \arrow[r] & H_1(I_0'', B) \arrow[r, "\psi"] & H_1(I_0, B) \arrow[r, "\epsilon"] & (0 :_{B/I_0''} f_n) \arrow[r] & 0
\end{tikzcd}
\end{equation}

By applying snake lemma, we obtain the following long exact sequence of $B$-modules,
\begin{align}
	0 \longrightarrow \ker(\textcolor{violet}{{\cdot g_1}}) \longrightarrow \ker(\textcolor{red}{{\cdot g_1}}) \longrightarrow \ker(\textcolor{blue}{{\cdot g_1}}) \longrightarrow \operatorname{coker}(\textcolor{violet}{{\cdot g_1}}) \longrightarrow \operatorname{coker}(\textcolor{red}{{\cdot g_1}}) \longrightarrow \operatorname{coker}(\textcolor{blue}{{\cdot g_1}}) \longrightarrow 0.
\end{align}

Now, observe that,
\begin{equation}
\ker(\textcolor{blue}{{\cdot g_1}}) = (0 :_{B/I_0''} f_n) \cap (0 :_{B/I_0''} g_1)
\end{equation} and
\begin{equation} \label{end}
\ker(\textcolor{violet}{{\cdot g_1}}) = (0 :_{B/I_0'} f_{n-1}) \cap (0 :_{B/I_0'} g_1).
\end{equation}
Since $g_1$ is regular on $B/I_0''$ (part of the assumptions in the definition of Class I and Class II) and $B/I_0'$ (follows from Lemma \ref{Reg_seq}), the maps $\textcolor{violet}{{\cdot g_1}}$ and $\textcolor{blue}{{\cdot g_1}}$ are injective, implying that  $\textcolor{red}{{\cdot g_1}}$ is also injective. That is, $g_1$ is a regular element on $H_1(I_0, B)$.
\vspace{2mm}

\noindent\textbf{Class I.}
Now, for $I_k= I_0 + \langle g_1, \ldots, g_k\rangle $, set $I_k'=I_0' + \langle g_1, \ldots, g_k\rangle$ and $I_k''=I_0'' + \langle g_1, \ldots, g_k\rangle$. Then by following the steps from Equation (\ref{Kos-dev-1}) to Equation (\ref{end}) inductively, and using the assumptions in the definition of Class I, one obtains that $\{g_1, \ldots, g_k \}$ is $H_1(I_{0},B)$ regular.

\noindent\textbf{Class II.}
Based on the previous discussions, the commuting diagram (\ref{CD1}) can be extended to obtain the following,
\begin{equation}\label{CD2}
\begin{tikzcd}
  & 0  \arrow[d] & 0  \arrow[d] & 0  \arrow[d] & \\
0 \arrow[r] & H_1(I_0'', B) \arrow[r, "\psi"] \arrow[d, "{\textcolor{violet}{\cdot g_1}}"] & H_1(I_0, B) \arrow[r, "\epsilon"] \arrow[d, "\textcolor{red}{{\cdot g_1}}"] & (0 :_{B/I_0''} f_n) \arrow[r] \arrow[d, "\textcolor{blue}{\cdot g_1}"] & 0 \\
0 \arrow[r] & H_1(I_0'', B) \arrow[r, "\psi"] \arrow[d] & H_1(I_0, B) \arrow[r, "\epsilon"] \arrow[d] & (0 :_{B/I_0''} f_n) \arrow[r] \arrow[d] & 0\\
0 \arrow[r] & \dfrac{H_1(I_0'', B)}{g_1 H_1(I_0'', B)} \arrow[r] \arrow[d] & \dfrac{H_1(I_0, B)}{g_1 H_1(I_0, B)} \arrow[r] \arrow[d] & \dfrac{(0 :_{B/I_0''} f_n)}{g_1 (0 :_{B/I_0''} f_n)} \arrow[r] \arrow[d] & 0 \\
& 0   & 0   & 0  & 
\end{tikzcd}
\end{equation}

Thus we obtain the following exact sequence,
\begin{align}
	0 \longrightarrow \frac{H_1(I_0'', B)}{g_1 H_1(I_0'', B)} \longrightarrow \frac{H_1(I_0, B)}{g_1 H_1(I_0, B)} \longrightarrow \frac{(0 :_{B/I_0''} f_n)}{g_1 (0 :_{B/I_0''} f_n)} \longrightarrow 0.
\end{align}
By applying arguments similar to those following Equation (\ref{ses1}) inductively, and using the assumptions in the definition of Class II, we conclude that the sequence $\{g_1, \dots, g_k\}$ is regular on $H_1(I_0, B)$. 
\end{proof}

\subsection{Families of binomial edge ideals of deviation two via combinatorial constructions}
Let $G$ be a finite simple graph on the vertex set ${x_1, \ldots, x_n}$ with edge set $E(G)$. Let $B = K[x_1, \ldots, x_n, y_1, \ldots, y_n]$ be a polynomial ring over a field $K$.  
  
The \textit{binomial edge ideal} of $G$ in $B$, denoted by $J_G$, is defined as $J_G = \langle f_{ij} = x_i y_j - x_j y_i \;\mid\; \{i,j\} \in E(G)\rangle$ \cite{Herzog}.
Some classes of ideals that fall under Class I include the binomial edge ideals corresponding to graphs of the form given below.

\vspace{2mm}

Let $\ell, m, n$ be positive integers. We define two classes of graphs $ G_{1,(\ell,m,n)} $ and $ G_{2,(\ell,m,n)} $ as follows:

\vspace{3mm}

\noindent \textbf{The graphs $ G_{1,(\ell,m,n)}$:} Let $\ell \geq 2$ and $ m,n \geq 1$. Let $ P_\ell $ be a path on $ \ell $ vertices with vertex sequence $ \alpha_1, \alpha_2, \ldots, \alpha_\ell $, where each $ \alpha_i $ is connected to $ \alpha_{i+1} $ for $ 1 \leq i < \ell $. To the vertex $ \alpha_j \in P_\ell $, where $1 < j < \ell$ (that is, $\alpha_j$ is not a pendent vertex), attach two additional paths: 
 \begin{itemize}
        \item A path $ P_m $ with vertices $ \beta_1, \beta_2, \ldots, \beta_m $, where $ \beta_1 $ is connected to $ \alpha_j $, and $ \beta_i $ is connected to $ \beta_{i+1} $ for $ 1 \leq i < m $.
        \item A path $ P_n $ with vertices $ \gamma_1, \gamma_2, \ldots, \gamma_n $, where $ \gamma_1 $ is connected to $ \alpha_j $, and $ \gamma_i $ is connected to $ \gamma_{i+1} $ for $ 1 \leq i < n $.
    \end{itemize}

Thus, the graph $ G_{1,(\ell,m,n)} $ consists of a central path $ P_\ell $, with two paths $ P_m $ and $ P_n $ attached to the same non-pendant vertex $ x_j $ of $ P_\ell $.    

\begin{description} \label{Graphs_description}
    \item[\textbf{The graphs $ G_{1,(\ell,m,n)} $}]
    
    Let $\ell \geq 2$ and $ m,n \geq 1$. Let $ P_\ell $ be a path on $ \ell $ vertices with vertex sequence $ \alpha_1, \alpha_2, \ldots, \alpha_\ell $, where each $ \alpha_i $ is connected to $ \alpha_{i+1} $ for $ 1 \leq i < \ell $. To the vertex $ \alpha_j \in P_\ell $, where $1 < j < \ell$ (that is, $\alpha_j$ is not a pendent vertex), attach two additional paths:
    \begin{itemize}
        \item A path $ P_m $ with vertices $ \beta_1, \beta_2, \ldots, \beta_m $, where $ \beta_1 $ is connected to $ \alpha_j $, and $ \beta_i $ is connected to $ \beta_{i+1} $ for $ 1 \leq i < m $.
        \item A path $ P_n $ with vertices $ \gamma_1, \gamma_2, \ldots, \gamma_n $, where $ \gamma_1 $ is connected to $ \alpha_j $, and $ \gamma_i $ is connected to $ \gamma_{i+1} $ for $ 1 \leq i < n $.
    \end{itemize}
    
    Thus, the graph $ G_{1,(\ell,m,n)} $ consists of a central path $ P_\ell $, with two paths $ P_m $ and $ P_n $ attached to the same non-pendant vertex $ \red{\alpha_j} $ of $ P_\ell $.

\vspace{3mm}

$G_{1,(\ell,m,n)}:$ 
\begin{center}
\tikzset{every picture/.style={line width=0.75pt}} 

\begin{tikzpicture}[x=0.75pt,y=0.75pt,yscale=-1,xscale=1]

\draw    (120.24,118.22) -- (180.86,85.3) ;
\draw    (233.5,117.64) -- (180.86,85.3) ;
\draw    (233.5,117.64) -- (294.12,84.72) ;
\draw    (346.76,117.05) -- (294.12,84.72) ;
\draw  [dash pattern={on 0.84pt off 2.51pt}]  (346.76,117.05) -- (407.38,84.13) ;
\draw    (180.86,85.3) -- (223.93,132.44) ;
\draw    (223.93,132.44) -- (200,174.33) ;
\draw  [dash pattern={on 0.84pt off 2.51pt}]  (200,174.33) -- (243.07,221.47) ;
\draw    (180.86,85.3) -- (134.6,127.57) ;
\draw    (134.6,127.57) -- (161.71,171.4) ;
\draw  [dash pattern={on 0.84pt off 2.51pt}]  (161.71,171.4) -- (115.45,213.68) ;

\draw (410.07,80.91) node [anchor=north west][inner sep=0.75pt]   [align=left] {$P_\ell$};
\draw (95.12,210.3) node [anchor=north west][inner sep=0.75pt]   [align=left] {$P_m$};
\draw (247.49,217.27) node [anchor=north west][inner sep=0.75pt]   [align=left] {$P_n$};
\draw (108.99,107.47) node [anchor=north west][inner sep=0.75pt]   [align=left] {{\footnotesize $\alpha_1$}};
\draw (174.94,73.3) node [anchor=north west][inner sep=0.75pt]   [align=left] {{\footnotesize $\alpha_2$}};
\draw (225.99,103.96) node [anchor=north west][inner sep=0.75pt]   [align=left] {{\footnotesize $\alpha_3$}};
\draw (285.18,73.33) node [anchor=north west][inner sep=0.75pt]   [align=left] {{\footnotesize $\alpha_4$}};
\draw (339.65,102.4) node [anchor=north west][inner sep=0.75pt]   [align=left] {{\footnotesize $\alpha_5$}};
\draw (119.54,125.9) node [anchor=north west][inner sep=0.75pt]   [align=left] {{\footnotesize $\beta_1$}};
\draw (142.3,165.91) node [anchor=north west][inner sep=0.75pt]   [align=left] {{\footnotesize $\beta_2$}};
\draw (226.37,127.9) node [anchor=north west][inner sep=0.75pt]   [align=left] {{\footnotesize $\gamma_1$}};
\draw (205.23,167.86) node [anchor=north west][inner sep=0.75pt]   [align=left] {{\footnotesize $\gamma_2$}};

\end{tikzpicture}
\end{center}

\vspace{3mm}
    
    \item[\textbf{The graphs $ G_{2,(\ell,m,n)} $}]
    
    Let $\ell \geq 3$ and $ m,n \geq 1$. Let $ C_\ell $ be a cycle on $ \ell $ vertices with vertex sequence $ \alpha_1, \alpha_2, \ldots, \alpha_\ell $, where each $ \alpha_i $ is connected to $ \alpha_{i+1} $ for $ 1 \leq i < \ell $, and $ \alpha_\ell $ is connected back to $ \alpha_1 $. To a chosen vertex $ \alpha_j \in C_\ell $, where $1 \leq j \leq \ell$, attach two paths:
    \begin{itemize}
        \item A path $ P_m $ with vertices $ \beta_1, \beta_2, \ldots, \beta_m $, where $ \beta_1 $ is connected to $ \alpha_j $, and $ \beta_i $ is connected to $ \beta_{i+1} $ for $ 1 \leq i < m $.
        \item A path $ P_n $ with vertices $ \gamma_1, \gamma_2, \ldots, \gamma_n $, where $ \gamma_1 $ is connected to $ x_j $, and $ \gamma_i $ is connected to $ z_{i+1} $ for $ 1 \leq i < n $.
    \end{itemize}
    
    Thus, the graph $ G_{2,(\ell,m,n)} $ consists of a central cycle $ C_\ell $, with two paths $ P_m $ and $P_n$ attached to the same vertex $\alpha_j$ of the cycle.

\vspace{3mm}

$G_{2,(\ell,m,n)}:$ 
\begin{center}
\tikzset{every picture/.style={line width=0.75pt}} 

\begin{tikzpicture}[x=0.75pt,y=0.75pt,yscale=-1,xscale=1]

\draw    (228.75,150.79) -- (182.92,194.73) ;
\draw    (296.69,152.2) -- (228.75,150.79) ;
\draw    (182.92,194.73) -- (230.33,235.83) ;
\draw    (298.28,237.25) -- (230.33,235.83) ;
\draw   (296.69,152.2) -- (344.1,193.31) ;
\draw   [dash pattern={on 0.84pt off 2.51pt}] (344.1,193.31) -- (298.28,237.25) ;
\draw    (337.78,102.59) -- (296.69,152.2) ;
\draw    (405.73,98.34) -- (337.78,102.59) ;
\draw  [dash pattern={on 0.84pt off 2.51pt}]  (442.07,54.4) -- (417.58,84.01) -- (405.73,98.34) ;
\draw    (367.8,167.8) -- (296.69,152.2) ;
\draw    (421.53,143.7) -- (367.8,167.8) ;
\draw  [dash pattern={on 0.84pt off 2.51pt}]  (480,142.28) -- (421.53,143.7) ;

\draw (253.06,181.95) node [anchor=north west][inner sep=0.75pt]   [align=left] {$C_\ell$};
\draw (165.67,189.28) node [anchor=north west][inner sep=0.75pt]   [align=left] {{\footnotesize $\alpha_3$}};
\draw (290.36,241.34) node [anchor=north west][inner sep=0.75pt]   [align=left] {{\footnotesize $\alpha_5$}};
\draw (222.55,238.65) node [anchor=north west][inner sep=0.75pt]   [align=left] {{\footnotesize $\alpha_4$}};
\draw (220.13,138.67) node [anchor=north west][inner sep=0.75pt]   [align=left] {{\footnotesize $\alpha_2$}};
\draw (285.08,138.84) node [anchor=north west][inner sep=0.75pt]   [align=left] {{\footnotesize $\alpha_1$}};
\draw (345.09,187.53) node [anchor=north west][inner sep=0.75pt]   [align=left] {{\footnotesize $\alpha_\ell$}};
\draw (334.71,105.16) node [anchor=north west][inner sep=0.75pt]   [align=left] {{\footnotesize $\beta_1$}};
\draw (394.22,86.39) node [anchor=north west][inner sep=0.75pt]   [align=left] {{\footnotesize $\beta_2$}};
\draw (357.68,154.43) node [anchor=north west][inner sep=0.75pt]   [align=left] {{\footnotesize $\gamma_1$}};
\draw (418.78,148.5) node [anchor=north west][inner sep=0.75pt]   [align=left] {{\footnotesize $\gamma_2$}};

\draw (442.22,52.39) node [anchor=north west][inner sep=0.75pt]   [align=left] {{ $P_m$}};

\draw (480.78,140.5) node [anchor=north west][inner sep=0.75pt]   [align=left] {{ $P_n$}};

\end{tikzpicture}
\end{center}
\end{description}

Next, we recall certain basic concepts and results on graphs and their associated binomial edge ideals, which will be used in Proposition \ref{GRAPHS} to compute colon ideals.

\begin{notation} \label{Notation1}

Let $G$ be a simple graph. For an edge $e' \in G$, we denote by $G \setminus e'$ the graph with the same vertex set $V(G)$ but with edge set $E(G) \setminus \{e'\}$. An edge $e'$ is called a \textit{bridge} if its removal increases the number of connected components of the graph, i.e., $c(G) < c(G \setminus e')$, where $c(G)$ denotes the number of connected components of $G$.  

For a vertex $v \in V(G)$, the \textit{neighborhood} of $v$ in $G$ is defined as
\[
N_G(v) = \{ u \in V(G) \mid \{u,v\} \in E(G) \}.
\]
Let $e = \{i,j\} \notin E(G)$ be an edge that is added to $G$ to form $G \cup \{e\}$. Then the graph $G_e$ is defined on the same vertex set $V(G)$ with edge set
\[
E(G_e) = E(G) \cup \{ \{p,q\} \mid p,q \in N_G(i) \ \text{or} \ p,q \in N_G(j) \},
\]
see \cite[Definition 3.1]{MS}
\end{notation}

\begin{lem} \label{Colon}
Let $G$ be a simple graph and $J_G$ denote the corresponding binomial edge ideal. The following results describe certain colon ideals of binomial edge ideals.
\begin{enumerate}[(i)]
    \item \label{Colon1} \cite[Theorem 3.4]{MS}  
    Let $e = \{i,j\} \notin E(G)$ be a bridge in $G \cup \{e\}$. Then  
    $$
    (J_G : f_e) = J_{G_e}.
    $$
    
    \item \label{Colon2} \cite[Theorem 3.7]{MS}  
    Let $e = \{i,j\} \in E(G)$. Then
    $$
    (J_G : f_e) = J_{G_e} + \big( g_{P,t} \ \big| \ P: i,i_1,\ldots,i_s,j \ \text{is a path between $i$ and $j$ and} \ 0 \leq t \leq s \big),
    $$
    where $g_{P,0} = x_{i_1} \cdots x_{i_s}$ and for each $1 \leq t \leq s$,
    $$
    g_{P,t} = y_{i_1} \cdots y_{i_t} x_{i_{t+1}} \cdots x_{i_s}.
    $$
\end{enumerate}
\end{lem}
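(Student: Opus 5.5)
Both parts of Lemma~\ref{Colon} are quoted from \cite{MS}, so in the paper it suffices to cite \cite[Theorems~3.4 and~3.7]{MS}. If I were to prove them directly, I would compare the primary decompositions of the two sides. The basic tool is the decomposition $J_G=\bigcap_{S}P_S(G)$ of \cite{Herzog}. Here $P_S(G)=\langle x_i,y_i : i\in S\rangle+J_{\tilde G_1}+\cdots+J_{\tilde G_{c(S)}}$, where $\tilde G_k$ is the complete graph on the $k$-th connected component of $G\setminus S$. Since each $P_S(G)$ is prime,
\[
(J_G:f_e)=\bigcap_{S\,:\,f_e\notin P_S(G)}P_S(G).
\]
So the colon is again an intersection of primes, and the task is to recognise this intersection as the ideal on the right-hand side.

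\textbf{Part (i).} First I prove the inclusion $J_{G_e}\subseteq (J_G:f_e)$ by explicit identities. Let $p,q\in N_G(i)$. The three-term Pl\"ucker relation for the $2\times n$ matrix of variables gives
\[
f_{ij}f_{pq}=f_{ip}f_{jq}-f_{iq}f_{jp}.
\]
Since $f_{ip},f_{iq}\in J_G$, this shows $f_ef_{pq}\in J_G$. The case $p,q\in N_G(j)$ is symmetric.

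For the reverse inclusion I would use that $e$ is a bridge of $G\cup\{e\}$. This means $i$ and $j$ lie in different components of $G$. Hence $f_e\notin P_S(G)$ exactly when $i,j\notin S$ and $i,j$ lie in different components of $G\setminus S$, which is automatic here. So the surviving primes are those $P_S(G)$ with $i,j\notin S$. I would then show that each such $P_S(G)$ contains $J_{G_e}$. Conversely, I would show that every minimal prime $P_T(G_e)$ of $J_{G_e}$ contains some surviving $P_S(G)$; the natural choice is $S=T$, using that making $N_G(i)$ and $N_G(j)$ cliques does not change which cut sets avoid $i$ and $j$. Together these give equality of the two intersections.

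\textbf{Part (ii).} As literally written, with $e\in E(G)$ we have $f_e\in J_G$, and the colon would be the whole ring. So the intended statement must be the one from \cite{MS}: the colon $(J_{G\setminus e}:f_e)$, with $G_e$ formed in $G\setminus e$. Here the containments are again checked on generators.
\begin{itemize}
\item The $J_{G_e}$ part is handled by the Pl\"ucker identity above.
\item For $g_{P,t}$, I would use induction along the path $P: i,i_1,\ldots,i_s,j$. Telescoping identities of the form $x_{i_1}f_{i\,i_2}\equiv x_{i_2}f_{i\,i_1}$ modulo $f_{i_1i_2}$ show that $g_{P,t}f_e\in J_{G\setminus e}$.
\end{itemize}
The reverse inclusion again goes through the primes $P_S(G\setminus e)$ not containing $f_e$. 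These are the $S$ avoiding $i$ and $j$, and they split into two kinds: those separating $i$ from $j$, and those not separating them.
\begin{itemize}
\item For the non-separating $S$, the prime contains $f_e$, so these $S$ drop out of the intersection.
\item For the separating $S$, every path from $i$ to $j$ meets $S$. The prime therefore contains some variable $x_{i_k}$ or $y_{i_k}$ for every path, which is how the monomials $g_{P,t}$ arise.
\end{itemize}

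The main obstacle is exactly this last matching. One must show that the intersection of the primes for separating cut sets equals $J_{G_e}$ plus the ideal generated by the mixed monomials $g_{P,t}$. I would try first to show that the right-hand side is radical, for instance through a Gr\"obner basis with respect to the lexicographic order in which the $x$ variables precede the $y$ variables. After that, equality can be checked prime by prime.
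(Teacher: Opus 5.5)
The paper gives no argument of its own here; it simply cites \cite[Theorems~3.4 and~3.7]{MS}, so your opening sentence is exactly its route. Your diagnosis of (ii) is also correct. For $e\in E(G)$ one has $f_e\in J_G$, so the colon is the unit ideal. The trivial path $i,j$ gives $g_{P,0}=1$, so the printed statement is vacuous, and $G_e$ is only defined for $e\notin E(G)$. The intended statement is $(J_{G\setminus e}:f_e)=J_{(G\setminus e)_e}+(g_{P,t})$, with $P$ running over paths in $G\setminus e$, so $s\geq 1$.

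As a standalone argument, your sketch of (i) works once you justify that $S=T$ is a surviving index. The sets $N_{G_e}(i)=N_G(i)$ and $N_{G_e}(j)=N_G(j)$ are cliques in $G_e$, so $i$ and $j$ are simplicial and cannot belong to a cut set $T$ giving a minimal prime of $J_{G_e}$. For $T$ avoiding $i,j$, each new edge $\{p,q\}$ is bypassed by $p,i,q$ or $p,j,q$. Hence $G\setminus T$ and $G_e\setminus T$ have the same components, and $P_T(G)=P_T(G_e)$.

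For (ii), however, you only prove the easy inclusion $\mathfrak a:=J_{(G\setminus e)_e}+(g_{P,t})\subseteq (J_{G\setminus e}:f_e)$. Since $J_{G\setminus e}\subseteq\mathfrak a$ and $J_{G\setminus e}$ is radical, the reverse inclusion is equivalent to $f_e$ being a nonzerodivisor on $B/\mathfrak a$. Knowing that $\mathfrak a$ is radical (your proposed Gr\"obner basis step) does not give this. You would still have to show that no minimal prime of $\mathfrak a$ contains $f_e$. That amounts to identifying those primes with the $P_S(G\setminus e)$ for $S$ separating $i$ from $j$, which is precisely the matching you leave open. So for (ii) your proposal still rests entirely on \cite{MS}; that is fine in the context of this paper, but it is not a proof.
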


In the following result, we use the notations introduced in the definitions of Class I and Class II, as well as those in Notation \ref{Notation1}.

\begin{prop}\label{GRAPHS}
The binomial edge ideals corresponding to the graphs $G_{1,(\ell,m,n)}$ and $G_{2,(\ell,m,n)}$ in their respective polynomial rings satisfy the properties of \emph{Class I}.
\end{prop}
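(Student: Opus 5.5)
The plan is to exhibit, for each graph, an explicit decomposition $I_0 = \langle f_1,\ldots,f_{n-2},f_{n-1},f_n\rangle$ together with elements $g_1,\ldots,g_k$, and then check conditions (i)--(iii) of Class I. The base ideal $I_0$ is the binomial edge ideal of a small core graph. For $G_{1,(\ell,m,n)}$ the core is the star (claw) centred at $\alpha_j$ with leaves $\alpha_{j-1},\alpha_{j+1},\beta_1,\gamma_1$, or a tree containing such a claw. For $G_{2,(\ell,m,n)}$ the core is the cycle $C_\ell$ with the two edges $\{\alpha_j,\beta_1\},\{\alpha_j,\gamma_1\}$ attached. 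The remaining path edges are adjoined one at a time, ordered outward from the core; these are the $g_t=f_{e_t}$. The last two generators $f_{n-1},f_n$ of $I_0$ are the binomials of the two pendant edges at $\alpha_j$, namely $e'=\{\alpha_j,\beta_1\}$ and $e''=\{\alpha_j,\gamma_1\}$. The remaining generators of the core are then a complete intersection: for a path, or for a cycle minus an edge (again a path), the binomial edge ideal is generated by a regular sequence. Counting grades via the known formula for binomial edge ideals of trees and unicyclic graphs (height $=$ number of vertices minus one for such connected graphs with few cut vertices), one checks that the deviation is two. This is exactly the claw or cycle-with-whiskers phenomenon, and Lemma \ref{reg-grade} then shows that adjoining the $g_t$ keeps deviation two.

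For conditions (i) and (ii), I would use that each $g_t$ is the binomial of a new edge $e_t$ attaching a new leaf, i.e.\ involving a fresh vertex whose variables $x_v,y_v$ do not occur in the previous ideal. Since $e_t$ is a bridge in the enlarged graph, Lemma \ref{Colon}(\ref{Colon1}) gives $(J_{H}:f_{e_t})=J_{H_{e_t}}$, where $H$ is the graph built so far (with $I_0$ or $I_0''$ as the starting ideal). Because the new vertex $v$ is isolated in $H$, its neighbourhood is empty and only the neighbours of the other endpoint get completed. So the real task is to show $J_{H_{e_t}}=J_H$, or more precisely that $f_{e_t}$ avoids every associated prime. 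The most direct route is to note that $f_{e_t}=x_uy_v-x_vy_u$ is linear in the fresh variables $x_v,y_v$, with coefficients $y_u,-x_u$ that generate an ideal of height two. Since $B/J_H$ is flat over $K[x_v,y_v]$ in these variables, an element of the form $x_uy_v-x_vy_u$ is regular as soon as $\langle x_u,y_u\rangle$ is not contained in any associated prime of $J_H$. I expect this to follow from the description of the minimal primes $P_S$ together with the fact that binomial edge ideals are radical. The same argument applies with $I_0''$ in place of $I_0$, since $I_0''$ is again a binomial edge ideal of a subgraph.

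The main obstacle is condition (iii): $(I_0'+\langle g_1,\ldots,g_k\rangle : f_{n-1})\subseteq(I_0'+\langle g_1,\ldots,g_k\rangle : f_n)$ for every $k$, including $k=0$. Here $I_k'$ is the binomial edge ideal of the graph $H'$ obtained by deleting both $e'$ and $e''$, and in $H'\cup\{e'\}$ and $H'\cup\{e''\}$ both edges are bridges. Lemma \ref{Colon}(\ref{Colon1}) therefore gives $(I_k':f_{n-1})=J_{H'_{e'}}$ and $(I_k':f_n)=J_{H'_{e''}}$. The graph $H'_{e'}$ adds a complete graph on $N_{H'}(\alpha_j)\cup\{\alpha_j\}$ and on $N_{H'}(\beta_1)\cup\{\beta_1\}$, while $H'_{e''}$ does the same with $\gamma_1$ in place of $\beta_1$. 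The $\alpha_j$ parts coincide. The inclusion would then reduce to comparing the cliques at $\beta_1$ and $\gamma_1$, which are edges in $H'$ already in both, since a leaf-path vertex has at most one further neighbour $\beta_2$ (resp.\ $\gamma_2$). However, $J_{H'_{e'}}\subseteq J_{H'_{e''}}$ need not literally hold. If it fails, I would instead fix the labelling so that $f_{n-1}$ is the binomial of an edge with smaller neighbourhood, e.g.\ take $e'$ to be a pendant edge whose far endpoint is a leaf of $H'$ ($m=1$, or after reordering which $g$'s are adjoined). Then $N_{H'}(\beta_1)=\emptyset$, so $H'_{e'}$ only completes the $\alpha_j$-neighbourhood and is contained in $H'_{e''}$. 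Checking this case by case for $G_1$ and $G_2$, with $k$ increasing, is where I expect most of the work to lie.
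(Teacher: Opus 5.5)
\textbf{Gap in the $G_{2,(\ell,m,n)}$ case: your decomposition breaks the Class~I setup.}

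Take $f_{n-1},f_n$ to be the two pendant edges at $\alpha_j$, as you propose. Then $f_1,\ldots,f_{n-2}$ are the $\ell$ binomials of $C_\ell$. But $\height J_{C_\ell}=\ell-1$, so these are not a regular sequence. Your parenthetical `cycle minus an edge' does not apply, because nothing has been removed from the cycle.

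Since $\grade I_0=\ell$, you need $\ell$ of the $\ell+2$ binomials whose edges form a disjoint union of paths. Now $\alpha_j$ has degree four and the cycle must be broken. So both omitted edges lie at $\alpha_j$, and at least one of them is a cycle edge.

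The paper keeps one pendant edge, say $\{\alpha_j,\beta_1\}$, in the regular part. It uses the other pendant edge together with a cycle edge as the last two generators. For the regular part to be a path, that cycle edge $e_c$ must be at $\alpha_j$.

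Then $e_c$ is not a bridge of $H'\cup e_c$, so your bridge-only analysis of (iii) does not cover it. You need Lemma \ref{Colon}(\ref{Colon2}), which gives $(I_k':f_{e_c})=J_{H'_{e_c}}+(g_{P,t})$. Since $H'_{e_c}=H'_{\{\alpha_j,\gamma_1\}}$ (each only joins the two neighbours of $\alpha_j$ in $H'$), (iii) holds with $f_{n-1}=f_{\alpha_j\gamma_1}$ and $f_n=f_{e_c}$. It holds only in this order. The reason is that $g_{P,0}$ is a nonconstant monomial, while a binomial edge ideal contains no monomials: under $y_i\mapsto x_i$ every $f_{pq}$ maps to $0$.

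A fix closer to your approach: take $f_{n-1},f_n$ to be the two cycle edges at $\alpha_j$. Then $I_0'$ is the ideal of two disjoint paths and both distinguished edges are bridges. Both colons equal the binomial edge ideal of $H'$ with the edge $\{\beta_1,\gamma_1\}$ added. Your argument then goes through as written.

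\textbf{The $G_{1,(\ell,m,n)}$ case is fine and is essentially the paper's route.} The two pendant edges at $\alpha_j$ serve as $f_{n-1},f_n$, path binomials form the regular part, and the remaining path edges are adjoined outward.

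Your own neighbourhood computation already proves (iii) there with equality. Since $N_{H'}(\beta_1)$ and $N_{H'}(\gamma_1)$ have at most one element, both bridge colons equal $J_{H'}$ with the edge joining the two path-neighbours of $\alpha_j$ added. So your hedge that the inclusion ``need not literally hold'' is unfounded.

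Likewise, (i) and (ii) follow directly from Lemma \ref{Colon}(\ref{Colon1}). The attaching vertex has at most one neighbour, so $H_{e_t}=H$. Your McCoy/flatness argument is correct but not needed.

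\textbf{Fix the height count.} The height is not ``number of vertices minus one''; that would make the tree ideals complete intersections and the unicyclic one of deviation one. It is attained at $P_{\{\alpha_j\}}$, giving $|S|+N-c(S)=N-3$ for the $G_1$ core and $N-2$ for the $G_2$ core. This is what yields deviation two.
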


\begin{proof}
We begin with the family of graphs $G_{1,(\ell,m,n)}$. Without loss of generality, let $I_0$ denote the binomial edge ideal corresponding to the graph $G_{1,(\ell,1,1)}$. From \cite{EHH,Rinaldo}, the edge binomials of the path graph $P_\ell$ form a regular sequence. Hence $J_{P_\ell}$ is a complete intersection ideal. Furthermore, Lemma~\ref{Colon} implies that 
$$
(J_{P_{\ell}}:f_{\alpha_2 \beta_1}) \neq J_{P_{\ell}} \quad \text{and} \quad (J_{P_{\ell}}:f_{\alpha_2 \gamma_1}) \neq J_{{P_{\ell}}}.
$$
Therefore, by Lemma \ref{Reg_seq}, $I_0$ is a deviation two ideal in the corresponding polynomial ring.

Next, let $I_1$ be the binomial edge ideal of the graph $G_{1,(\ell,2,1)}$. Applying Lemma~\ref{Colon}, we deduce that $f_{\beta_1 \beta_2}$ is both $B/I_0$-regular and $B/I_0''$-regular. Consequently, by Lemma \ref{reg-grade}, the binomial edge ideal corresponding to $G_{1,(\ell,2,1)}$ is also of deviation two. Moreover, Lemma \ref{Colon} gives
$$
(I_0' + \langle f_{\beta_1 \beta_2} \rangle : f_{\alpha_2 \beta_1}) \subseteq (I_0' + \langle f_{\beta_1 \beta_2} \rangle : f_{\alpha_2 \gamma_1}),
$$

Thus, the pair of ideals $\{I_0, I_1\}$ satisfies the assumptions stated in the definition of Class I. Repeating this argument inductively, we conclude that the graphs $G_{1,(\ell,m,n)}$ yield a family of ideals satisfying the defining conditions of both Class I.

Now, we consider the family of graphs $G_{2,(\ell,m,n)}$. Without loss of generality, denote by $I_0$ the binomial edge ideal corresponding to $G_{2,(\ell,1,1)}$. From the colon conditions—obtained using Lemma \ref{Colon}, we have  
$$
(J_{P_{\ell-1}} : f_{\alpha_{\ell-1} \alpha_\ell}) \neq J_{P_{\ell-1}}, 
\quad (J_{P_{\ell-1}} : f_{\alpha_{1} \beta_1}) = J_{P_{\ell-1}}, 
\quad \text{and} \quad (J_{P_{\ell-1}} : f_{\alpha_{1} \gamma_1}) \neq J_{P_{\ell-1}}.
$$
Combining these relations with Lemmas \ref{Reg_seq} and \ref{reg-grade}, it follows that $I_0$ is a deviation two ideal in its corresponding polynomial ring. Furthermore, by applying arguments analogous to those used in the case of $G_{1,(\ell,m,n)}$, we conclude that the entire family $G_{2,(\ell,m,n)}$ corresponds to binomial edge ideals satisfying the defining properties of Class I.
\end{proof}

\begin{rem}
It is shown in \cite{AN_NK24,ANNK24} that the edge binomials corresponding to the graphs $G_{1,(\ell,m,n)}$ and $G_{2,(\ell,m,n)}$ each form a $d$-sequence. Consequently, by \cite[Theorem 4.1]{HSV2} and \cite[Corollary 2.2]{HSV1}, the associated approximation complex $\mathcal{Z}_{\bullet}$ (see the next section for the definition) of the binomial edge ideal is acyclic.
\end{rem}

\section{Bigraded Free Resolutions of Symmetric algebra of Deviation $2$ ideals} \label{Section4}
The structure of symmetric algebras and their resolutions has been extensively studied for ideals of deviation zero \cite{MR1971,Eisenbud_text}, deviation one \cite{NKCV3, Ulrich&Polini}, and perfect ideals of deviation two \cite{Ulrich&Polini}. In this section, we extend this study to non-perfect ideals of deviation two.

Let $B=K[X]$ be a standard graded polynomial ring in the variables $X$, and $I$ be a graded ideal of $B$ minimally generated by $\{f_1, \ldots, f_n\}$ in degrees $\{d_1, \ldots,d_n\}$ respectively. Furthermore, let $S=K[X,Y]$ be a bigraded polynomial ring, where $Y=[y_1 \cdots y_{n}]$ with $\deg x_i=(1,0)$ and $\deg y_j=(d_j,1)$. Then the approximation complex $\mathcal{Z}_{\bullet}(I,B)$ is defined as follows:

\begin{equation}\label{eq:ApproxComplex}
\begin{aligned}
\mathcal{Z}_\bullet(I,B):\quad
0 &\longrightarrow Z_{n-1}\otimes_B S(-(n-1))
\xrightarrow{\partial_{n-1}} \cdots
\xrightarrow{\partial_4} Z_{3}\otimes_B S(-3)
\xrightarrow{\partial_{3}} Z_{2}\otimes_B S(-2)
\xrightarrow{\partial_{2}} \\
&\quad Z_{1}\otimes_B S(-1)
\xrightarrow{\partial_{1}} Z_{0}\otimes_B S
\longrightarrow 0
\end{aligned}
\end{equation}
where $Z_i$, for $i=1,\ldots,n$, denotes the $i^{\text{th}}$ Koszul cycle of $I$ and the differentials are induced by the Koszul maps with respect to the sequence $\{y_1 \ldots, y_n\}$ \cite{HSV1}. Since the zeroth homology of this complex is the symmetric algebra of $I$, the acyclicity of the complex implies that it provides a resolution of the symmetric algebra.

Now, assume $I$ to be an ideal of $B$ of deviation $2$ such that the corresponding approximation complex $\mathcal{Z}_\bullet(I,B)$ is acyclic.
Let $(\mathcal{F}_{\bullet},\varphi_{\bullet})$ denote a graded free resolution of $B/I$ of the following form,
\begin{equation}
\cdots \longrightarrow \bigoplus_{j \geq 3}B(-j)^{\beta_{3j}}\stackrel {\varphi_3} \longrightarrow \bigoplus_{j \geq 2}B(-j)^{\beta_{2j}} \stackrel{\varphi_2}\longrightarrow \bigoplus_{j=1}^{n}B(-d_j) \stackrel{\varphi_1} \longrightarrow B \longrightarrow 0. 
\end{equation}
Assume the following to be the Koszul complex, $(\mathcal{V}_\bullet,\nu_\bullet)$ corresponding to $\varphi_1$,

$$0 \longrightarrow {V}_{n} \stackrel{\nu_{n}}\longrightarrow \cdots \stackrel{\nu_{4}}\longrightarrow {V}_{3}   \stackrel{\nu_{3}} \longrightarrow {V}_{2} \stackrel{\nu_{2}} \longrightarrow {V}_{1} \stackrel{\varphi_{1}}\longrightarrow B \longrightarrow 0$$
where ${V}_i=\wedge^iB^n$ for $i=1, \ldots, n$.

Since the difference between the minimal number of generators of $I$ with its height is $2$, the corresponding Koszul homologies, $H_i(I,B)$ vanishes for $i \geq 3$, where $H_i(I,B)$ denotes the $i$-th homology of the Koszul complex. This implies that a free resolution of the Koszul cycles $Z_i$ for $i \geq 3$ can be obtained from the Koszul complex, as shown in the following diagram. Note that the dotted lines here indicate inclusions.

\[
\begin{tikzcd}
0 \arrow[r] & {V}_{n}  \arrow[r, "\nu_{n}"] & \cdots \arrow[r, "\nu_6"] & {V}_5 \arrow[r, "\nu_5"] \arrow[d, two heads] & {V}_4 \arrow[r, "\nu_4"] \arrow[d, two heads] & {V}_3 \arrow[r, "\nu_3"]  & {V}_2 \arrow[r, "\nu_2"]  & {V}_1 \arrow[r]& B \arrow[r] & 0 \\
&  & & Z_4 \arrow[ur, dashrightarrow] & Z_3 \arrow[ur, dashrightarrow] &  &  & & 
\end{tikzcd}
\]
We then obtain the following double complex, where $\partial_\mathbf{y}$ denotes the differentials induced by the Koszul maps with respect to the sequence $\{y_1, \ldots, y_n\}$, and $\partial_\mathbf{f}$ denotes the differentials induced by the Koszul maps with respect to the sequence $\{f_1, \ldots, f_n\}$,

\begin{equation} \label{double_complex}
\begin{tikzpicture}[baseline=(current  bounding  box.center)]
			
                \matrix (m) [matrix of math nodes,row sep=2.5em,column sep=2.3em,minimum width=2em, text height=1.8ex, text depth=0.25ex]				{ 
					& \vdots & \vdots & \vdots  \\
					0 & \wedge^{n}B^{n} \otimes_BS(-(n-2)) & \cdots & \wedge^{5}B^{n} \otimes_BS(-3) \\
					\wedge^{n}B^{n} \otimes_BS(-(n-1)) & \wedge^{n-1}B^{n} \otimes_BS(-(n-2)) & \cdots & \wedge^{4}B^{n} \otimes_BS(-3)  \\
				Z_{n-1} \otimes_B S(-(n-1)) & Z_{n-2} \otimes_B S(-(n-2)) & \cdots & Z_{3} \otimes_B S(-3) \\
                0 & 0 &  &  0  \\
                };						
				\path[-stealth]

                
                (m-1-2) edge node [right] {} (m-2-2)
				(m-1-4) edge node [right] {} (m-2-4)
                (m-2-1) edge node [right] {} (m-3-1)
                (m-2-2) edge node [right] {$\partial_\mathbf{f}$} (m-3-2)
				(m-2-4) edge node [right] {$\partial_\mathbf{f}$} (m-3-4)
                (m-3-1) edge node [right] {$\partial_\mathbf{f}$} (m-4-1)
                (m-3-2) edge node [right] {$\partial_\mathbf{f}$} (m-4-2)
				(m-3-4) edge node [right] {$\partial_\mathbf{f}$} (m-4-4)
                (m-4-1) edge node [right] {} (m-5-1)
                (m-4-2) edge node [right] {} (m-5-2)
				(m-4-4) edge node [right] {} (m-5-4)

                (m-2-1) edge node [above] {} (m-2-2)
                (m-2-2) edge node [above] {$\partial_\mathbf{y}$} (m-2-3)
				(m-2-3) edge node [above] {$\partial_\mathbf{y}$} (m-2-4)
                
                (m-3-1) edge node [above] {$\partial_\mathbf{y}$} (m-3-2)
                (m-3-2) edge node [above] {$\partial_\mathbf{y}$} (m-3-3)
				(m-3-3) edge node [above] {$\partial_\mathbf{y}$} (m-3-4)
               
                 (m-4-1) edge node [above] {$\partial_\mathbf{y}$} (m-4-2)
                (m-4-2) edge node [above] {$\partial_\mathbf{y}$} (m-4-3)
				(m-4-3) edge node [above] {$\partial_\mathbf{y}$} (m-4-4)

				;
                \end{tikzpicture}
\end{equation}

Let $(\mathcal{C}_\bullet, \sigma_\bullet)$ denote the total complex of the truncation of the double complex (\ref{double_complex}), with modules of the form 
\begin{equation} \label{truncated_complex}
C_i= \oplus_{j=0}^{i}\wedge^{4+i}B^{n} \otimes_BS(-j-3)
\end{equation}
for $i=0, \ldots, n-4$. Then from Equation (\ref{truncated_complex}), modules $C_i$, for $i=0, \ldots, n-4$, can be viewed as bishifted graded modules (with respect to the bigrading of $S$) of the following form,

\begin{equation}\label{bishifts}
C_i= \bigoplus_{j=0}^{i} \left[ \bigoplus_{1 \leq j_1 < j_2< \cdots <j_{i+4} \leq n} S(-(d_{j_1}+ d_{j_2}+ \cdots + d_{j_{i+4}}),-j-3)\right].
\end{equation}

From \cite[Proposition 4.5]{Ulrich&Polini} which states that the following complex  $\mathcal{J}_{\bullet}$ is acyclic whenever the corresponding complex $\mathcal{Z}_{\bullet}$ is acyclic, we obtain that the complex below resolves the symmetric algebra of $I$, where the tail part consists of bigraded free-modules.
\begin{equation}\label{NewComplex}
\mathcal{J}_\bullet:\quad 0 \longrightarrow C_{n-4} \stackrel{\sigma_{n-4}} \longrightarrow C_{n-3} \stackrel{\sigma_{n-3}}\longrightarrow \cdots \stackrel{\sigma_1} \longrightarrow C_{0}   \stackrel{{\eta}} \longrightarrow Z_{2} \otimes_B S(-2) \stackrel{} \longrightarrow  Z_{1} \otimes_B S(-1) \stackrel{}\longrightarrow S \longrightarrow 0.
\end{equation}
With the above building blocks in place, we now address the resolution of the symmetric algebras of ideals in Class I and Class II, using the ideas discussed so far.

Consider a family of ideals of the form in Class I or Class II. Set the following notations.
\begin{enumerate}[a)]
    \item Let $I_0=\langle f_1, \ldots, f_n \rangle$ with $\deg f_i=d_i$ for $1 \leq i \leq n$.
    \item Let the degree of $g_i$ be denoted by $d_{n+i}$ for $i \geq 1$.
    
    \item For $\alpha=\{i_1, i_2, \ldots,i_{g_{\alpha}}\} \subset [k]=\{1, \ldots, k\}$, $g_{\alpha} \leq k$, with $i_1 < i_2 <  \cdots < i_{g_{\alpha}}$, set $d_{\alpha}=\sum_{s=1}^{g_{\alpha}} d_{n+i_s}$ and let $|\alpha|$ denote the number of elements in the subset $\alpha$. 
 \item $(\mathcal{F}^{(k)}_\bullet, \varphi^{(k)}_\bullet)$ denotes a graded free resolution of $B/I_{k}$.
    
    \item $(\mathcal{V}^{(k)}_{\bullet},\nu^{(k)}_\bullet)$ denotes the Koszul complex corresponding to the generators of $I_k$.

    \item $(\mathfrak{L}^{(k)}_{\bullet},\tau^{(k)}_\bullet)$ denotes a graded free resolution of $H_2(I_k,B)$, the second homology of the Koszul complex corresponding to the generators of $I_k$.

    \item $(\mathcal{C}^{(k)}_\bullet, \sigma^{(k)}_\bullet)$ denote the total complex of the truncation of the double complex (\ref{double_complex}), with modules of the form given in Equation (\ref{truncated_complex}) which comes with respect to the Koszul complex on $I_k$.
\end{enumerate}
With the above notations, the following result gives the non-standard bigraded free resolutions of symmetric algebra of $I_k$.

\begin{thm} \label{RES}

Let $\{I_k\}_{k \geq 0}$ be a family of ideals of the form described in \emph{Class I} or \emph{Class II} in the standard graded polynomial ring $B = K[X]$. 
Consider the following graded free complexes:
\begin{enumerate}[a)]
    \item A graded free resolution of $I_0=\langle f_1, \ldots, f_n \rangle$,
    \begin{equation} \label{res_I0}
    \mathcal{F}^{(0)}_{\bullet} : \quad \cdots 
    \stackrel{\varphi^{(0)}_3}{\longrightarrow}  \bigoplus_{s \geq 2} B(-s)^{\beta_{(2,\,s)}} 
    \stackrel{\varphi^{(0)}_2}{\longrightarrow} \bigoplus_{s \geq 1} B(-s)^{\beta_{(1,\,s)}} 
    \stackrel{\varphi^{(0)}_1}{\longrightarrow} B \longrightarrow 0,
    \end{equation}
    where $\beta_{(i,\,s)}$ denotes the rank of the $i^{\text{th}}$ free module in degree shift $s$.
    
    \item A graded free resolution of $H_2(I_0,B)$,
    \begin{equation} \label{res_H2}
    \mathfrak{L}^{(0)}_{\bullet} : \quad \cdots 
    \stackrel{\tau^{(0)}_3}{\longrightarrow}  \bigoplus_{m \geq 2} B(-m)^{\beta'_{(2,\,m)}} 
    \stackrel{\tau^{(0)}_2}{\longrightarrow}  \bigoplus_{m \geq 1} B(-m)^{\beta'_{(1,\,m)}}  
    \stackrel{\tau^{(0)}_1}{\longrightarrow} B \longrightarrow 0,
    \end{equation}
    where $\beta'_{(i,\,m)}$ denotes the rank of the $i^{\text{th}}$ free module in degree shift $m$.
    
    \item The graded Koszul complex associated to $I_0$,
    \begin{equation} \label{res_Koszul}
    \mathcal{V}^{(0)}_{\bullet} : \quad \cdots 
    \stackrel{\nu^{(0)}_3}{\longrightarrow}  \bigoplus_{p \geq 2} B(-p)^{\beta''_{(2,\,p)}} 
    \stackrel{\nu^{(0)}_2}{\longrightarrow}  \bigoplus_{p \geq 1} B(-p)^{\beta''_{(1,\,p)}}  
    \stackrel{\nu^{(0)}_1}{\longrightarrow} B \longrightarrow 0,
    \end{equation}
    where $\beta''_{(i,\,p)}$ denotes the rank of the $i^{\text{th}}$ free module in degree shift $p$.
\end{enumerate}

Assume that the associated approximation complex $\mathcal{Z}_{\bullet}(I_k,B)$ is acyclic.  
Let $S = K[X,Y]$ with $Y = [y_1, \ldots, y_{n+
\mu(I_k)}]$, graded by $\deg X_i = (1,0)$ and $\deg Y_j = (d_{n+j},1)$ for $1 \leq j \leq n+k$.  
Then there exists a non-standard bigraded free resolution of the symmetric algebra $\mathrm{Sym}(I_k)$ over $S$ of the form given below.

$$\mathcal{D}^{(k)}_{\bullet} : \quad \cdots \longrightarrow {D}^{(k)}_2 \stackrel{\delta_2^{(k)}}\longrightarrow {D}^{(k)}_1 \stackrel{\delta_1^{(k)}}\longrightarrow {D}_0^{(k)} \longrightarrow 0$$

where, $${D}_0^{(k)}=S,$$ $${D}^{(k)}_1= \bigoplus_{s \geq 2}S(-s,-
1)^{\beta_{(2,s)}} \oplus \left[ \bigoplus_{i=0}^{k-1} \left[ \bigoplus_{\alpha \subset [k], \,  |\alpha|=i+1}  \left[ \bigoplus_{s \geq 1-i} S(-(s+d_{\alpha}),-1)^{\beta_{(1-i,s)}} \right] \right] \right],$$
\vspace{2mm}
and for $\ell \geq 2$,
\vspace{2mm}
\begin{align*}
{D}^{(k)}_\ell= & \bigoplus_{s \geq \ell+1}S(-s,-1)^{\beta_{(\ell+1,s)}} \oplus \left[ \bigoplus_{i=0}^{k-1} \left[ \bigoplus_{\alpha \subset [k], \,  |\alpha|=i+1}  \left[ \bigoplus_{s \geq \ell-i} S(-(s+d_{\alpha}),-1)^{\beta_{(\ell-i,s)}} \right] \right] \right]  \\
&  \text{ \hspace{40mm}}\textcolor{blue}{\bigoplus} \\
 &   \bigoplus_{p \geq \ell+1}S(-p,-2)^{\beta''_{(\ell+1,p)}} \oplus \left[ \bigoplus_{i=0}^{k-1} \left[ \bigoplus_{\alpha \subset [k], \,  |\alpha|=i+1}  \left[ \bigoplus_{p \geq \ell-i} S(-(p+d_{\alpha}),-2)^{\beta''_{(\ell-i,p)}} \right] \right] \right]\\
 &  \text{ \hspace{40mm}}\textcolor{blue}{\bigoplus} \\
 &\bigoplus_{m \geq \ell-2}S(-m,-2)^{\beta'_{(\ell-2,m)}} \oplus \left[ \bigoplus_{i=0}^{k-1} \left[ \bigoplus_{\alpha \subset [k], \,  |\alpha|=i+1}  \left[ \bigoplus_{m \geq \ell-i-3} S(-(m+d_{\alpha}),-2)^{\beta'_{(\ell-i-3,m)}} \right] \right] \right] \\
  &  \text{ \hspace{40mm}} \textcolor{blue}{\bigoplus} \\
  &  \text{ \hspace{15mm}} \bigoplus_{j=0}^{\ell-3}\left[ \bigoplus_{1 \leq i_1< i_2< \cdots < i_{\ell+1} \leq n+k } S(-(d_{i_1}+ d_{i_2}+ \cdots + d_{i_{\ell+1}}),-j-3)\right].
\end{align*} 
Moreover, the differentials $\delta_\ell^{(k)}$ have the standard block form induced by the mapping cone constructions.
\end{thm}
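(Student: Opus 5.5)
The plan is to start from the acyclic complex $\mathcal{J}_\bullet$ of (\ref{NewComplex}) for $I_k$, whose acyclicity follows from the assumed acyclicity of $\mathcal{Z}_\bullet(I_k,B)$ via \cite[Proposition 4.5]{Ulrich&Polini}. Its only non-free terms are $Z_1\otimes_B S(-1)$ and $Z_2\otimes_B S(-2)$; the tail $C^{(k)}_\bullet$ is already free, with the bishifts of (\ref{bishifts}), and these give the last summand of $D^{(k)}_\ell$. I will replace the two non-free terms by free resolutions, and assemble everything by iterated mapping cones, since the modules appear as consecutive terms of an exact complex.

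\textbf{Step 1: resolving $Z_1$.} From $0\to Z_1\to V_1\to I_k\to 0$, the module $Z_1$ is the first syzygy of $I_k$. Hence the truncation $\mathcal{F}^{(k)}_{\ge 2}$, shifted by one homological step, resolves it. I compute $\mathcal{F}^{(k)}$ from $\mathcal{F}^{(0)}$ by induction on $k$. Each $g_j$ is $B/I_{j-1}$-regular: this follows from condition (i) of Class I/II and the usual permutation argument for regular sequences on $B/I_0$. So $\mathcal{F}^{(k)}\cong \mathcal{F}^{(0)}\otimes K_\bullet(g_1,\dots,g_k)$, an iterated mapping cone. Its $\ell$-th module is $\bigoplus_{\alpha}\mathcal{F}^{(0)}_{\ell-|\alpha|}(-d_\alpha)$, which after reindexing $i=|\alpha|-1$ gives the first line of $D^{(k)}_\ell$ and the description of $D_1^{(k)}$.

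\textbf{Step 2: resolving $Z_2$.} Use $0\to B_2\to Z_2\to H_2(I_k,B)\to 0$, where $B_2=\operatorname{im}\nu_3\cong V_3/Z_3$. Since $H_i(I_k,B)=0$ for $i\ge3$, the Koszul complex $\mathcal{V}^{(k)}_{\ge 3}$ resolves $B_2$. For $H_2(I_k,B)$, apply Lemma \ref{Res_of_H2} iteratively with $f=g_j$ and $I=I_{j-1}$; the hypotheses hold by $B/I_{j-1}$-regularity and by Proposition \ref{H1-regular}. I need $g_j$ to be regular on $H_1(I_{j-1},B)$ and not merely on $H_1(I_0,B)$. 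I expect this to be obtainable by running the same snake-lemma argument with $I_{j-1}',I_{j-1}''$, which is what the inductive remark in that proof provides. This gives $\mathfrak{L}^{(k)}\cong\mathfrak{L}^{(0)}\otimes K_\bullet(g_1,\dots,g_k)$.

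The horseshoe lemma then yields a free resolution of $Z_2$ whose modules are $\mathcal{V}^{(k)}_{\ell+1}\oplus\mathfrak{L}^{(k)}_{\ell-2}$ in the appropriate homological slot. Likewise, $\mathcal{V}^{(k)}$, being Koszul on $f$'s and $g$'s, splits as $\mathcal{V}^{(0)}\otimes K(g)$. Tensoring with $S(-2)$ produces the second and third lines of $D^{(k)}_\ell$.

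\textbf{Step 3: assembling the resolution.} Split the exact complex $\mathcal{J}_\bullet$ into short exact sequences and apply the mapping cone or horseshoe construction successively. The chain maps come from lifting $\eta$ and the Koszul $\partial_{\mathbf y}$ maps along the free resolutions. The total complex has the displayed modules, with differentials in block form, and it is acyclic because each cone of a quasi-isomorphic lift is acyclic.

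\textbf{Main obstacle.} The hard part is the bookkeeping of homological indices and bidegree shifts. In particular I must check that the summands from $Z_2$ land exactly in positions $\ell+1$ and $\ell-2$, and that the $C$-tail is offset correctly, as the $\bigoplus_{j=0}^{\ell-3}$ term indicates. I would verify this first on small cases, $k=0$ and $k=1$, before committing to the general index shift.
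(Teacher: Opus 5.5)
Your proposal is correct and is essentially the paper's proof: $Z_1$ is resolved by the truncation of $\mathcal{F}^{(0)}\otimes K_\bullet(g_1,\dots,g_k)$, and $Z_2$ by the horseshoe lemma on $0\to B_2\to Z_2\to H_2(I_k,B)\to 0$, using $\mathcal{V}^{(k)}_{\ge 3}$ and the iterated cone $\mathfrak{L}^{(0)}\otimes K_\bullet(g_1,\dots,g_k)$; two mapping cones along $\mathcal{J}_\bullet$ (over the lift of $\eta$, then over the inclusion $\operatorname{im}\partial_2\hookrightarrow Z_1\otimes_B S(-1)$) then give $D^{(k)}_\ell=F^{(k)}_{\ell+1}\otimes S(-1)\oplus(V^{(k)}_{\ell+1}\oplus\mathfrak{L}^{(k)}_{\ell-2})\otimes S(-2)\oplus C^{(k)}_{\ell-3}$. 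The point you flag, regularity of $g_j$ on $H_1(I_{j-1},B)$ (which the paper passes over), needs no new snake-lemma argument: since $g_j$ is $B/I_{j-1}$-regular, the Koszul long exact sequence gives $H_1(I_j,B)\cong H_1(I_{j-1},B)/g_jH_1(I_{j-1},B)$, so inductively $H_1(I_{j-1},B)\cong H_1(I_0,B)/(g_1,\dots,g_{j-1})H_1(I_0,B)$ and Proposition \ref{H1-regular} applies directly.
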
 

\begin{proof}
Consider the $\mathcal{Z}$-complex corresponding to $I_k$,

\begin{align*}
\mathcal{Z}_\bullet(I_k,B): \hspace{2 mm} 0 \longrightarrow Z_{n-1}(I_k,B) \otimes S(-(n-1))  \stackrel{\partial_{n-1}^{(k)}}\longrightarrow \cdots \stackrel{\partial_{4}^{(k)}}\longrightarrow Z_{3}(I_k,B) \otimes S(-3)  \stackrel{\partial_{3}^{(k)}} \longrightarrow Z_{2}(I_k,B) \otimes S(-2) \stackrel{\partial_{2}^{(k)}} \longrightarrow  \\
Z_{1}(I_k,B) \otimes S(-1)\stackrel{\partial_{1}^{(k)}}\longrightarrow  Z_0(I_k,B) \otimes S \longrightarrow 0
\end{align*}

where $Z_i(I_k,B),$ $i=1, \ldots, n$, denotes the $i^{\text{th}}$ Koszul cycle of $I_k$.

From the discussions at the beginning of this section, the following complex provides a resolution of the symmetric algebra of $I_k$.
\begin{equation*}\label{NewComplexv2}
0 \longrightarrow C_{n-4}^{(k)} \stackrel{\sigma_{n-4}^{(k)}} \longrightarrow C_{n-3}^{(k)} \stackrel{\sigma_{n-3}^{(k)}}\longrightarrow \cdots \stackrel{\sigma_1^{(k)}} \longrightarrow C_{0}^{(k)}   \stackrel{\eta^{(k)}} \longrightarrow Z_{2}(I_k,B) \otimes_B S(-2) \stackrel{\partial_2^{(k)}} \longrightarrow  Z_{1}(I_k,B) \otimes_B S(-1) \stackrel{\partial_1^{(k)}}\longrightarrow S \longrightarrow 0.
\end{equation*}

Consider the short exact sequence,
\begin{equation} \label{SES1}
0 \longrightarrow \operatorname{coker}(\sigma_1^{(k)})   \stackrel{\eta^{(k)}} \longrightarrow Z_{2}(I_k,B) \otimes_B S(-2) \stackrel{\partial_2^{(k)}} \longrightarrow \operatorname{im}(\partial_2^{(k)})
\longrightarrow 0.
\end{equation}

Using Proposition \ref{H1-regular} and Lemma \ref{Res_of_H2}, for each $0 \leq j < k$, $$H_2(I_{j+1}, B) \cong H_2(I_{j}, B)/  g_{j+1}  H_2(I_{j}, B).$$  Iteratively applying the mapping cone construction to the short exact sequences
\[ 0 \longrightarrow H_2(I_j, B)(-d_{j+1}) \xrightarrow{\cdot g_{j+1}} H_2(I_j, B) \longrightarrow H_2(I_{j+1}, B) \longrightarrow 0, \]

gives a resolution of $H_2(I_k, B)$ in terms of the resolution of $H_2(I_0, B)$.

As a consequence, we obtain the following explicit graded free resolution of $H_2(I_k,B)$, 
$$ \cdots \longrightarrow \mathfrak{L}^{(k)}_2 \longrightarrow \mathfrak{L}^{(k)}_1 \longrightarrow \mathfrak{L}^{(k)}_0 \longrightarrow 0$$

where for $\ell \geq 0$, $$\mathfrak{L}^{(k)}_\ell=\bigoplus_{m \geq \ell}B(-m)^{\beta'_{(\ell,m)}} \oplus \left[ \bigoplus_{i=0}^{k-1} \left[ \bigoplus_{\alpha \subset [k], \,  |\alpha|=i+1}  \left[ \bigoplus_{m \geq \ell-i-1} B(-(m+d_{\alpha}))^{\beta'_{(\ell-i-1,m)}} \right] \right] \right]$$
 
and the differentials which come corresponding to the iterative mapping cone construction take the following form.

For $\ell >1$,
$$
\tau_{\ell}^{(k)} = \begin{pmatrix}
- \tau_{\ell}^{(k-1)} & g_\ell \cdot \mathrm{id}_{{L}_{\ell-1}^{(k-1)}} \\
0 & \tau_{\ell-1}^{(k-1)}
\end{pmatrix},
$$
and
$$
\tau_1^{(k)} = \begin{pmatrix}
\tau_1^{(k-1)} &
g_\ell \cdot \mathrm{id}_{{L}_0^{(k-1)}} \\
\end{pmatrix}.
$$

Now, consider the short exact sequence
$$
0 \longrightarrow B_3(I_k, B) \longrightarrow Z_2(I_k, B) \longrightarrow H_2(I_k, B) \longrightarrow 0,
$$
and observe that the truncated Koszul complex
$$
0 \longrightarrow {V}_{n}^{(k)}\stackrel{\nu^{(k)}_n}  \longrightarrow \cdots \stackrel{\nu^{(k)}_5} \longrightarrow {V}_{4}^{(k)} \stackrel{\nu^{(k)}_4} \longrightarrow {V}_{3}^{(k)} \longrightarrow 0,
$$
where, for $\ell \geq 3$,
$$
{V}^{(k)}_\ell = \bigoplus_{p \geq \ell} B(-p)^{\beta''_{(\ell,p)}} \oplus \left[ \bigoplus_{i=0}^{k-1} \left[ \bigoplus_{\substack{\alpha \subset [k], \, |\alpha| = i+1}} \left[ \bigoplus_{p \geq \ell-i-1} B(-(p + d_{\alpha}))^{\beta''_{(\ell-i-1,p)}} \right] \right] \right],
$$
with $$
\nu_{\ell}^{(k)} = \begin{pmatrix}
- \nu_{\ell}^{(k-1)} & g_\ell \cdot \mathrm{id}_{{K}_{\ell-1}^{(k-1)}} \\
0 & \nu_{\ell-1}^{(k-1)}
\end{pmatrix},
$$
is a graded free resolution of $B_3(I_k,B)$.
Then, by applying the Horseshoe Lemma to this short exact sequence, one obtains a graded free resolution $\mathcal{G}^{(k)}_\bullet$ of $Z_2(I_k,B)$, where, for $\ell \geq 0$, ${G}_\ell^{(k)}={K}_{\ell+3}^{(k)} \bigoplus {L}_\ell^{(k)}$ and the corresponding differentials are denoted by $\epsilon_{\ell}^{(k)}$.

Tensoring with $S(-2)$, we obtain a bigraded resolution $\tilde{\mathcal{G}}^{(k)}_\bullet$ of $Z_2(I_k,B) \otimes S(-2)$ whose differentials, denoted by $\tilde{\epsilon}_{\ell}^{(k)}$, 
are induced from $\epsilon_{\ell}^{(k)} \otimes \mathrm{id}_{S(-2)}$. Taking the mapping cone of the map $\eta^{(k)}$ in Equation \eqref{SES1}, we get a bigraded free resolution of $\operatorname{im}(\partial_2^{(k)})$
\[ (\text{Cone}(\eta^{(k)}))_\ell =  ({K}_{\ell+3}^{(k)} \oplus {L}_\ell^{(k)}) \otimes S(-2) \oplus  {C}_{\ell-1}^{(k)} \]

with differentials,
$$
\rho_{\ell}^{(k)} = \begin{pmatrix}
- \tilde\epsilon_{\ell}^{(k)} & \eta^{(k)} \\
0 &    \sigma_{\ell-1}^{(k)}
\end{pmatrix}.
$$

Now, consider the following short exact sequence, 
\begin{equation} \label{SES2}
0 \longrightarrow \operatorname{im}(\partial_2^{(k)})   \stackrel{\eta'^{(k)}} \longrightarrow Z_{1}(I_k,B) \otimes_B S(-1) \stackrel{\partial_1^{(k)}} \longrightarrow \operatorname{im}(\partial_1^{(k)})
\longrightarrow 0
\end{equation}
where $\eta'^{(k)}$ is the inclusion map.

Since $Z_1(I_k,B)$ is isomorphic to the first syzygy of $I_k$, we have the following truncated complex is a graded free resolution of $Z_1(I_k,B)$, 
$$ \cdots \longrightarrow {F}^{(k)}_4 \stackrel{\varphi^{(k)}_4} \longrightarrow {F}^{(k)}_3 \stackrel{\varphi^{(k)}_3} \longrightarrow {F}^{(k)}_2 \longrightarrow 0$$

where for $\ell \geq 2$, $${F}^{(k)}_{\ell}=\bigoplus_{s \geq \ell}B(-s)^{\beta_{(\ell,s)}} \oplus \left[ \bigoplus_{i=0}^{k-1} \left[ \bigoplus_{\alpha \subset [k], \,  |\alpha|=i+1}  \left[ \bigoplus_{s \geq \ell-i-1} B(-(s+d_{\alpha}))^{\beta_{(\ell-i-1,s)}} \right] \right] \right]$$

and the differentials are given by, 
$$
\varphi_{\ell}^{(k)} = \begin{pmatrix}
- \varphi_{\ell}^{(k-1)} & g_\ell \cdot \mathrm{id}_{{F}_{\ell-1}^{(k-1)}} \\
0 & \varphi_{\ell-1}^{(k-1)}
\end{pmatrix}.
$$

Denote the graded free resolution of $Z_1(I_k, B)$ by ($\mathcal{H}^{(k)}_\bullet$,$\kappa^{(k)}_\bullet$), where for $\ell \geq 0$, ${H}^{(k)}_\ell = {F}^{(k)}_{\ell+2}$. Let $\tilde{\mathcal{H}}^{(k)}_\bullet$ denote the corresponding bigraded free resolution of $Z_1(I_k, B) \otimes S(-1)$, given by $\tilde{{H}}^{(k)}_\ell = {F}^{(k)}_{\ell+2} \otimes S(-1)$ with induced differentials denoted by $\tilde\kappa^{(k)}_\ell$ for all $\ell \geq 0$.

Taking the mapping cone of $\eta'^{(k)}$ in Equation~\eqref{SES2}, we finally obtain the bigraded free resolution of the defining ideal of the Rees algebra of $I_k$:
$$(\text{Cone}(\eta'^{(k)}))_0= \tilde{H}_{0}^{(k)}={F}_{2}^{(k)}  \otimes S(-1),$$
and for $\ell \geq 1$,
$$(\text{Cone}(\eta'^{(k)}))_\ell=\tilde{H}_{\ell}^{(k)}  \oplus    \text{Cone}(\eta^{(k)})_{\ell-1}   ={F}_{\ell+2}^{(k)}  \otimes S(-1)   \oplus \left( ({K}_{\ell+2}^{(k)} \oplus {L}_{\ell-1}^{(k)}) \otimes S(-2) \oplus {C}_{\ell-2}^{(k)} \right).$$
The differentials are given by, $\delta_1^{(k)}= \tilde\kappa_{\ell}^{(k)}$, 
and for $\ell \geq 1$
$$
\delta_{\ell}^{(k)} = \begin{pmatrix}
- \tilde\kappa_{\ell}^{(k)}  & \eta'^{(k)} \\
0 & \rho_{\ell-1}^{(k)} 
\end{pmatrix}.
$$

\end{proof}

We conclude with an elementary but useful observation.

\begin{rem}
If, in addition to the hypotheses of Theorem \ref{RES}, the ideal $I$ is of linear type, then Theorem \ref{RES} provides an explicit description of the bigraded free resolution of the corresponding Rees algebra.
\end{rem}

\noindent{\bf Acknowledgment.} The first author gratefully acknowledges partial support from the Anusandhan National Research Foundation (ANRF), Government of India, under the Core Research Grant (File Number: CRG/2023/007668). The second author acknowledges support from the University Grants Commission (UGC), Government of India.

\vspace{3mm}

\noindent{\bf Statements and Declarations.} 

\vspace{2mm}

\noindent{\bf Competing Interests.} On behalf of all authors, the corresponding author states that there is no conflict of interest.

\bibliographystyle{abbrv}

\bibliography{references}

\end{document}